\tikzset{
    triple/.style args={[#1] in [#2] in [#3]}{
        #1,preaction={preaction={draw,#3},draw,#2}
    }
}      
\theoremstyle{plain}
\newtheorem{thm}{Theorem}[section]
\newtheorem{lem}[thm]{Lemma}
\newtheorem{prop}[thm]{Proposition}
\newtheorem{cor}[thm]{Corollary}
\theoremstyle{definition}
\newtheorem{defn}[thm]{Definition}
\newtheorem*{ack}{Acknowledgement}
\newtheorem{example}[thm]{Example}
\theoremstyle{remark}
\newtheorem*{rmk}{Remark}
\numberwithin{equation}{section}
\newcommand{\bC}{\mathbb{C}}
\newcommand{\bN}{\mathbb{N}}
\newcommand{\Z}{\mathbb{Z}}
\newcommand{\I}{\mathbb{I}}
\newcommand{\bB}{\mathbb{B}}
\newcommand{\cB}{\mathcal{B}}
\newcommand{\cH}{\mathcal{H}}
\newcommand{\D}{\mathcal{D}}
\newcommand{\X}{\mathcal{X}}
\newcommand{\cA}{\mathcal{A}}
\newcommand{\cO}{\mathcal{O}}
\newcommand{\cC}{\mathcal{C}}
\newcommand{\cP}{\mathcal{P}}
\newcommand{\cX}{\mathcal{X}}
\newcommand{\cN}{\mathcal{N}}
\newcommand{\fb}{\mathfrak{b}}
\newcommand{\fh}{\mathfrak{h}}
\newcommand{\fg}{\mathfrak{g}}
\newcommand{\extW}{\widetilde{W}}
\newcommand{\affW}{{W}_a}
\newcommand{\twc}{{\underline{c}}}
\newcommand{\jr}{{\mathcal{J}}}
\newcommand{\Rmat}{\mathcal{R}}
\newcommand{\md}{\textup{mod}}
\newcommand{\tc}[1]{\check{#1}}
\newcommand{\br}[1]{\left\langle{#1}\right\rangle}
\newcommand{\qlbar}{{\overline{\mathbb{Q}_\ell}}}
\newcommand{\floor}[1]{\left\lfloor#1\right\rfloor}
\DeclareMathOperator{\fgr}{\mathcal{Q}}
\DeclareMathOperator{\tr}{\textup{tr}}
\DeclareMathOperator{\Ad}{\textup{Ad}}
\DeclareMathOperator{\Aut}{\textup{Aut}}
\DeclareMathOperator{\Irr}{\textup{Irr}}
\DeclareMathOperator{\Lie}{\textup{Lie}}
\title{On the affine Sch{\"u}tzenberger involution}
\author{Dongkwan Kim}
\address{School of Mathematics\\
  University of Minnesota Twin Cities\\
  Minneapolis, MN 55455\\
  U.S.A.}
\email{kim00657@umn.edu}
\date{\today}							
\begin{document}
\begin{abstract} 
We consider an involution on the affine Weyl group of type $A$ induced from the nontrivial automorphism on the (finite) Dynkin diagram. We prove that the number of left cells fixed by this involution in each two-sided cell  is given by a certain Green polynomial of type $A$ evaluated at -1.
\end{abstract}

\setcounter{tocdepth}{1}
\maketitle

\renewcommand\contentsname{}
\tableofcontents

\section{Introduction}

This paper is motivated by a former result of Lusztig \cite{lus83:left}. Let $W$ be the Weyl group of type $A_{n-1}$ and $\tc{W}$ be the centralizer of the longest element in $W$. Then $\tc{W}$ is naturally the Weyl group of type $C_m$ where $m=\floor{n/2}$. If $L: \tc{W} \rightarrow \bN$ is the restriction of the usual length function $l: W \rightarrow \bN$, 
then \cite{lus83:left} gives a description of left cells in $\tc{W}$ with the weight function $L$, and showed that each left cell carries an irreducible representation of $\tc{W}$.

One of our goals in this paper is to extend his method to an affine setting. Let $\affW$ be the affine Weyl group of type $\tilde{A}_{n-1}$ and let $\omega$ be the involution of $\affW$ which corresponds to the nontrivial automorphism of the (finite) Dynkin diagram of type $A$. Then, the main theorem in this paper states that the number of left cells fixed by $\omega$ in each two-sided cell of $\affW$ is given by a certain Green polynomial of type $A$, originally defined by Green \cite{gre55}, evaluated at -1. 

This paper is considered as a companion of \cite{cfkly}. There exists a bijection defined by Shi \cite{shi86, shi91} between left cells of $\affW$ and row-standard Young tableaux, called the generalized Robinson-Schensted correspondence. Under this bijection, the involution $\omega$ corresponds to an affine analogue of the usual Sch{\"u}tzenberger involution. The combinatorics of this involution is extensively studied in \cite{cfkly}, and the main theorem in this paper also follows from a more general result therein. Here, instead we explain this involution $\omega$ in a representation-theoretic view, and also provide another proof of our main theorem in terms of representation theory.

This paper is organized as follows: in Section \ref{section:notation} we cover basic notations and definitions used in this paper; in Section \ref{section:involution} we define and describe the involution $\omega$, called the affine Sch{\"u}tzenberger involution; in Section \ref{section:mainthm} we state the main theorem of this paper and remark some related facts; in Section \ref{section:proof} we prove the main theorem modulo some combinatorial reduction; in Section \ref{sec:comb} we introduce a combinatorial model associated with our main objects and complete the proof of the main theorem.


\begin{ack} The author thanks Roman Bezrukavnikov for kindly explaining his theory to him; it would have been impossible to write this paper without his help. The author is indebted to Michael Chmutov, Gabriel Frieden, and Joel Brewster Lewis for discussions about combinatorial descriptions of this subject, including contents in Section 6. He also thanks Sam Hopkins and Thomas McConville for making such discussions possible. Finally, he is grateful to George Lusztig and Devra Johnson for their helpful comments.
\end{ack}

\section{Notations and definitions} \label{section:notation}
\subsection{Basic notations} For a set $X$, we define $|X|$ to be the cardinal of $X$. If there is a map $f\colon X \rightarrow X$, we denote by $X^f$ the set of elements in $X$ fixed by $f$. Similarly, if there exists a group $G$ acting on $X$, then we denote by $X^G$ the set of elements in $X$ fixed by the action of $G$.

For a variety $X$, we denote by $H^i(X)=H^i(X, \qlbar)$ its $i$-th $\ell$-adic cohomology group, and define $H^*(X) \colonequals \bigoplus_{i\in \Z} (-1)^iH^i(X)$ to be their alternating sum (as a virtual $\qlbar$-vector space). 

For an abelian category $\cC$, we write $D^b(\cC)$ to be its bounded derived category, and $K(\cC)=K(\cC)_\bC$ to be the complexified Grothendieck group of $\cC$. Likewise, for a variety $X$, we write $K(X)=K(X)_\bC$ to be the complexified $K$-theory of $X$, i.e. the complexified Grothendieck group of the category of coherent sheaves on $X$. If there is an action of a group $G$ on $X$, then we denote by $K^G(X)=K^G(X)_\bC$ the complexified $G$-equivariant $K$-theory of $X$.

\subsection{General setup}
We fix $n \in \Z_{>0}$ throughout this paper.
Let $G\colonequals GL_{n}$ be the general linear group of rank $n$ defined over $\bC$, $B\subset G$ be the Borel subgroup consisting of upper-triangular matrices, and $T\subset B$ be the maximal torus consisting of diagonal matrices. Also let $\fg \colonequals \Lie G$, $\fb\colonequals \Lie B$, and $\fh \colonequals \Lie T$ be corresponding Lie algebras. Define $\cB\colonequals G/B$ to be the flag variety of $G$. For a nilpotent element $N \in \fg$, let $\cB_N$ be the Springer fiber of $N$, defined by $\cB_N \colonequals \{ gB \in \cB \mid \Ad(g)^{-1}(N) \in \fb\}$. 

\subsection{Weyl groups of $G$}
Let $W$ and $\extW$ be the Weyl group and the extended affine Weyl group of $G$, respectively. They are defined as follows.
\begin{gather*} W\colonequals N(G, T)/T, \qquad \extW \colonequals N(G_{\bC((t))}, T_{\bC((t))}) / T_{\bC[[t]]}.
\end{gather*}
Here, $N(X, Y)$ denotes the normalizer of $Y$ in $X$, and $G_R$ (resp. $T_R$) is the base change of $G$ (resp. $T$) from $\bC$ to $R$. Also, we define $\affW \subset \extW$ to be the subgroup generated by elements in $N(G_{\bC((t))}, T_{\bC((t))})$ whose determinant is contained in $\bC[[t]]^\times$, called the affine Weyl group of $G$. Then, $W$ is naturally a subgroup of $\affW$.

We choose $\{s_1, \ldots, s_{n-1}\} \subset W$ such that $s_i$ corresponds to swapping $i$-th and $(i+1)$-th entries of diagonal matrices. Also we let $s_0, \tau \in \extW$ be the images of
$$
\begin{pmatrix}0 & 0 & 0&\cdots&0 & 0 & t
\\0 & 1 &0& \cdots &0& 0 & 0
\\0 & 0 &1& \cdots &0& 0 & 0
\\\cdots & \cdots & \cdots & \cdots & \cdots& \cdots & \cdots
\\0 & 0 &0& \cdots &1& 0 & 0
\\0 & 0 &0& \cdots &0& 1 & 0
\\t^{-1} & 0 & 0&\cdots & 0 & 0&0
\end{pmatrix} \textup{ and } \begin{pmatrix}0 & 0 & 0&\cdots&0 & 0 & t
\\1 & 0 &0& \cdots &0& 0 & 0
\\0 & 1 &0& \cdots &0& 0 & 0
\\\cdots & \cdots & \cdots & \cdots & \cdots& \cdots & \cdots
\\0 & 0 &0& \cdots &0& 0 & 0
\\0 & 0 &0& \cdots &1& 0 & 0
\\0 & 0 & 0&\cdots & 0 & 1&0
\end{pmatrix},$$
respectively. Then, 
\begin{enumerate}[label=$\bullet$]
\item $(W,\{s_1, \ldots, s_{n-1}\})$ and $(W_a, \{s_0, s_1, \ldots, s_{n-1}\})$ are Coxeter groups, 
\item $\tau s_i \tau^{-1} = s_{i+1}$ for $0\leq i \leq n-2$ and $\tau s_{n-1} \tau^{-1} = s_0$, and
\item $\extW \simeq \affW \rtimes \br{\tau}, \br{\tau} \simeq \Z$.
\end{enumerate}
We define $S\colonequals \{s_0, s_1, \ldots, s_{n-1}\}$ to be the set of simple reflections of $\affW$.

\subsection{Involution $\omega$} Define $J \in G$ to be the matrix whose anti-diagonal entries are 1 and other entries are 0. We define an involutive automorphism $\omega$ on $G$ by
$$\omega: G \mapsto G : g \mapsto J(^tg^{-1})J^{-1}.$$
We abuse notation and write $\omega$ for an involution on any object which is naturally induced from the above automorphism. Clearly, it induces an involution on $\fg$ defined by $\omega(X) = -\Ad(J)(^tX)$. Also, it induces involutions on $W, \affW,$ and $\extW$, respectively. Indeed, direct calculation shows that
$$\omega(s_i) = s_{n-i} \textup{ for } 1 \leq i \leq n-1,\qquad \omega(s_0)=s_0,\qquad \omega(\tau) = \tau^{-1}.$$
On the other hand, since $\omega$ fixes $B$, it defines an action on the flag variety of $G$. If $N \in \fg$ is a nilpotent element fixed by $\omega$, then $\omega$ acts on its Springer fiber $\cB_N$ and thus acts on the cohomology and the $K$-theory of $\cB_N$ as well.


\subsection{Kazhdan-Lusztig cells} \label{sec:cells}
 We usually use the symbol $\twc$ (resp. $\Gamma$, $\Gamma^{-1}$) to denote a two-sided (resp. left, right) cell of $\extW$. (The notion of Kazhdan-Lusztig cells is first defined in \cite{kalu79} for Coxeter groups, and is generalized to extended affine Weyl groups in \cite{lus89:cell}.) Then each two-sided (resp. left, right) cell of $\affW$ is of the form $\twc \cap \affW$ (resp. $\Gamma \cap \affW$, $\Gamma^{-1} \cap \affW$), and this gives a bijection between two-sided (resp. left, right) cells of $\affW$ and $\extW$. Note that each two-sided (resp. left, right) cell of $\extW$ is stable under multiplication (resp. left multiplication, right multiplication) by $\tau \in \extW$.
%

In \cite{lus89:cell}, a canonical bijection between two-sided cells of $\extW$ and nilpotent orbits in $\fg$ is constructed. (Here, we identify $\fg$ with its Langlands dual.) We write $\twc_\lambda$ to be the two-sided cell that corresponds to the nilpotent orbit in $\fg$ of Jordan type $\lambda \vdash n$ under this bijection. Then $a(\twc_\lambda)$ is equal to the dimension of the Springer fiber corresponding to this nilpotent orbit, where $a$ is Lusztig's $a$-function defined in \cite{lus85:cell}.

\subsection{Partitions, tableaux, and tabloids} For a partition $\lambda$, we write $\lambda = (\lambda_1, \ldots, \lambda_r)$ for $\lambda_1 \geq \lambda_2 \geq \cdots\geq  \lambda_r >0$ or $\lambda=(1^{m_1}2^{m_2} \cdots)$ to describe its parts. Let $l(\lambda)$ be the length of $\lambda$, and put $\lambda_i=0$ if $i > l(\lambda)$. Define $|\lambda| \colonequals \sum_{i\geq 1} \lambda_i = \sum_{i\geq 1}im_i$. If $|\lambda|=k$, we also write $\lambda \vdash k$. We say $\lambda$ is strict if $\lambda_1 > \lambda_2> \cdots > \lambda_r >0$, or equivalently each $m_i$ is either 0 or 1. For another partition $\mu$, we write $\lambda \cup \mu$ to be the partition of $|\lambda|+|\mu|$ whose parts are the union (as a multiset) of parts of $\lambda$ and $\mu$.

\ytableausetup{smalltableaux}
For a partition $\lambda$, define $SYT(\lambda)$ to be the set of standard Young tableaux of shape $\lambda$ with entries $1, 2, \ldots, |\lambda|$. Similarly, we define $RSYT(\lambda)$ to be the set of row-standard Young tableaux of shape $\lambda$ with entries $1, 2, \ldots, |\lambda|$, which is defined by dropping the condition from $SYT(\lambda)$ that each column is strictly increasing. If $\lambda$ is a finite sequence of positive integers which is not necessarily a partition, we still write $RSYT(\lambda)$ to denote the set of row-standard Young tabloids of shape $\lambda$. For a tabloid $T$, we write $T=(T^{(1)}, \ldots, T^{(r)})$ where $T^{(i)}$ is the $i$-th part of $T$. For example, if $T\in RSYT((3,4))$ is $\begin{ytableau}1&3&4\\2&5&6&7\end{ytableau}$ with respect to the English notation, then $T^{(1)}=(1,3,4)$ and $T^{(2)}=(2,5,6,7)$.


\section{The affine Sch{\"u}tzenberger involution} \label{section:involution}
The involution $\omega$ on $G$ induces an involution on $\extW$, which also permutes Kazhdan-Lusztig cells of $\extW$.
\begin{defn} \emph{The affine Sch{\"u}tzenberger involution} is the involution induced by $\omega$ on the set of left cells of $\extW$, again denoted by $\omega$.
\end{defn}
If we restrict $\omega$ to the Coxeter group $(W, S-\{s_0\})$, then it corresponds to the nontrivial involutive automorphism on the Dynkin diagram of type $A$, which is the same as conjugation by the longest element of $W$. This involution clearly permutes the left cells of $W$, and it is equivalent to the usual Sch{\"u}tzenberger involution on standard Young tableaux under the Robinson-Schensted correspondence. This is why we call $\omega$ the affine Sch{\"u}tzenberger involution.

Note that the usual Sch{\"u}tzenberger involution preserves the shape of each standard Young tableau. It means that the corresponding involution on $W$ stabilizes each two-sided cell of $W$. The same is true for $\omega$, as the following lemma shows.

\begin{lem}\label{lem:2sstab} Suppose that $\twc$ is a two-sided cell of $\extW$. Then $\omega(\twc)=\twc$.
\end{lem}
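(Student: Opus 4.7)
The plan is to exploit the canonical bijection recalled in Section~\ref{sec:cells} between two-sided cells of $\extW$ and nilpotent $G$-orbits in $\fg$, under which $\twc_\lambda$ corresponds to the orbit of Jordan type $\lambda$. Since $\omega$ is an involution of $\extW$ that permutes two-sided cells, it induces an involution on the parametrizing set of partitions $\lambda \vdash n$; simultaneously, $\omega$ is an involutive automorphism of $\fg$ that permutes nilpotent orbits. Lusztig's bijection is constructed from intrinsic data of the affine Hecke algebra and the group $G$---the Kazhdan--Lusztig basis, the $a$-function, families of representations of $W$, the attachment of special nilpotent orbits, and so on---all of which are preserved by $\omega$ since $\omega$ comes from a genuine algebraic automorphism of $G$ preserving $B$ and $T$. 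Hence the bijection is $\omega$-equivariant, and it suffices to show that $\omega$ fixes every nilpotent $G$-orbit in $\fg$.

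This reduces the question to a direct linear-algebraic check. Recall that $\omega(X) = -\Ad(J)({}^tX)$ for $X \in \fg$. The inner factor $\Ad(J)$ does not affect the $G$-orbit; transposition preserves Jordan type, because every square matrix is conjugate to its transpose (for a single Jordan block $J_k$ the anti-diagonal permutation matrix does the job); and negation also preserves Jordan type, because for each Jordan block $J_k$ one has $-J_k = A J_k A^{-1}$ with $A = \text{diag}(1,-1,1,\ldots,(-1)^{k-1})$, and the obvious block-diagonal analogue handles a general nilpotent. Thus for any nilpotent $X \in \fg$ of Jordan type $\lambda$, the element $\omega(X)$ again has Jordan type $\lambda$, so $\omega$ fixes the corresponding nilpotent orbit. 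Combining with the previous paragraph yields $\omega(\twc_\lambda) = \twc_\lambda$ for every $\lambda \vdash n$.

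The main obstacle is the first step: the $\omega$-equivariance of Lusztig's bijection is not made explicit in \cite{lus89:cell} and genuinely requires unpacking the construction to verify that each ingredient---especially the parametrization of families of $W$-representations by special nilpotent orbits---is natural under algebraic automorphisms of $G$. An alternative route would be to invoke Shi's generalized Robinson--Schensted correspondence together with the fact that the affine Sch\"utzenberger involution preserves the shape of a tableau; however, that shape preservation is itself a central theme of the companion paper \cite{cfkly}, so invoking it here would amount to borrowing a result rather than giving a self-contained argument within the representation-theoretic framework of the present paper.
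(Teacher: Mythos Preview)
Your approach differs substantially from the paper's, and as you yourself identify, it has a real gap at the crucial step: the $\omega$-equivariance of Lusztig's bijection \cite{lus89:cell} between two-sided cells of $\extW$ and nilpotent orbits in $\fg$. You assert that the bijection is built from data ``all of which are preserved by $\omega$,'' but you do not actually verify this; tracing through the construction (which passes through the Langlands dual group, the asymptotic Hecke algebra, and the based-ring structure) is not a formality, and your own final paragraph concedes as much. Your second step---that $\omega$ preserves the Jordan type of every nilpotent element---is correct and elementary, but it cannot carry the argument by itself.

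The paper sidesteps this difficulty entirely with a much shorter proof. Since $\twc$ is stable under conjugation by $\tau$, \cite[Theorem 4.8(d)]{lus89:cell} gives $\twc \cap W \neq \emptyset$. On $W$, the involution $\omega$ is conjugation by the longest element, and (via the classical Robinson--Schensted correspondence and the shape-preserving property of the ordinary Sch\"utzenberger involution, as noted just before the lemma) this fixes every two-sided cell of $W$. Hence $\omega(\twc)$ and $\twc$ are two-sided cells sharing an element, so they coincide. This reduces the affine statement to the already-known finite one, rather than invoking the deep cell--orbit bijection together with an unproved compatibility with automorphisms.
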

\begin{proof}
It is clear that $\omega(\twc)$ is also a two-sided cell of $\extW$. Recall that $\twc$ is stable under multiplication by $\tau$, thus in particular under its conjugation. Thus by \cite[Theorem 4.8(d)]{lus89:cell}, $\twc$ intersects nontrivially with $W$. As we observed already that $\omega$ stabilizes each two-sided cell of $W$, the result follows.
\end{proof}

Therefore, it is possible to restrict $\omega$ to each two-sided cell of $\extW$. We are interested in the number of left cells in each two-sided cell that are fixed by $\omega$.

\section{Main theorem and some remarks} \label{section:mainthm}
\subsection{Statement of the main theorem}
For $k \in \bN$, let $\rho_2(k)$ be the partition $(2, 2, \ldots, 2) \vdash k$ (resp. $(2, 2, \ldots, 2, 1)\vdash k$) if $k$ is even (resp. odd). The main result of this paper is as follows.

\begin{thm}[Main theorem] \label{thm:main} Suppose that a two-sided cell $\twc_\lambda\subset \extW$ corresponds to the partition $\lambda \vdash n$. Then the number of left cells in $\twc_\lambda$ that are fixed by $\omega$ is given by $\fgr^\lambda_{\rho_2(n)}(-1)$, where $\fgr^\lambda_{\rho_2(n)}(t)$ is the Green polynomial (for type $A$) originally defined in \textup{\cite{gre55}}.
\end{thm}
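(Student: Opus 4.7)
My strategy is to realize left cells in $\twc_\lambda$ geometrically as a basis on which $\omega$ acts as a permutation, then count $\omega$-fixed cells via a trace formula, and finally identify the resulting trace with the Green polynomial evaluation. Fix a nilpotent $N\in\fg$ of Jordan type $\lambda$. Since $\omega$ preserves every nilpotent conjugacy class, we may choose $N$ with $\omega(N)=N$, so that $\omega$ acts on the Springer fiber $\cB_N$ and on its cohomology and $K$-theory.

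The first step is to invoke Bezrukavnikov's geometric realization of two-sided cells: left cells in $\twc_\lambda$ (modulo the action of $\br{\tau}$) are in canonical bijection with a distinguished basis of $K^{L\times\G_m}(\cB_N)$, where $L$ is the reductive centralizer of $N$ in $G$. The crucial compatibility to establish is that the geometric action induced by $\omega$ permutes this canonical basis, so that the number of $\omega$-fixed left cells equals $\tr(\omega;\,K^{L\times\G_m}(\cB_N))$ after the appropriate specialization. Since $\omega$ acts on $W$ as conjugation by the longest element $w_0\in W$, and since the Springer action of $W$ on $H^*(\cB_N)$ is intertwined with outer automorphisms in the expected way, this trace should be identified with the trace of $w_0\in\sym_n$ on the graded Springer representation.

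The final step is numerical. The cycle type of $w_0\in\sym_n$ is precisely $\rho_2(n)$. Green's classical identity expresses $\sum_i q^i \tr(w_\rho;\,H^{2i}(\cB_N)) = \fgr^\lambda_\rho(q)$ for $w_\rho\in\sym_n$ of cycle type $\rho$, after appropriate normalization. The passage from the graded geometric trace to the numerical count of fixed basis elements corresponds to evaluating the grading variable at $q=-1$, reflecting the fact that the Grothendieck-group trace of an involution on a permutation basis matches the alternating Euler characteristic $\sum_i (-1)^i\dim$. Combining these ingredients yields exactly $\fgr^\lambda_{\rho_2(n)}(-1)$.

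\textbf{Main obstacle.} The hard part is the first step: setting up the correct equivariant geometric model whose canonical basis is indexed by left cells of $\twc_\lambda$, and verifying that $\omega$ acts as a genuine permutation of the basis rather than merely as some linear automorphism. Bookkeeping between the $\br{\tau}$-action on cells of $\extW$ and the $\G_m$-equivariance on the geometric side requires care, as does pinning down the exact normalization that produces the specialization $q=-1$. The strong hint from the target formula is that these normalizations align, but making this rigorous within Bezrukavnikov's framework is where the substantive work lies.
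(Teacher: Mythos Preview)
Your overall plan---use Bezrukavnikov's geometric realization of cells, compute a trace of $\omega$ on $K$-theory of $\cB_N$, and identify it with a Green polynomial value---is the same strategy the paper pursues. But several steps in your sketch either differ from what the paper actually does or contain genuine errors.

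\textbf{The paper does not establish a direct $\omega$-equivariant bijection between left cells and a canonical basis of $K(\cB_N)$.} Instead, it first reduces (via a combinatorial argument using $R$-matrices) to the case where $\lambda$ is \emph{strict}, so that the reductive centralizer $F_{\twc}$ is a torus. Then it works with \emph{pairs} of left cells: the asymptotic Hecke algebra $\jr_\twc$ is identified with $K^{F_\twc}(\cA_N\otimes\cA_N\textup{-mod})$, and after forgetting equivariance (here the torus hypothesis is used) one obtains a canonical isomorphism $Mat_{\X\times\X}(\bC)\simeq K(\cB_N)\otimes K(\cB_N)$. Taking traces of $\omega$ on both sides gives the \emph{square} of the desired identity, and one then extracts the square root using that $\fgr^\lambda_{\rho_2(n)}(-1)=|\Irr(\cA_N)^\omega|\geq 0$. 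Your proposed direct bijection would short-circuit this, but showing that such a bijection is $\omega$-equivariant is exactly the step the paper avoids---the canonical basis of $K(\cB_N)$ is only a \emph{signed} basis, and the paper explicitly remarks that knowing $\omega$ permutes $\Irr(\cA_N)$ (rather than merely the signed basis) is crucial.

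\textbf{Your explanation of the specialization $q=-1$ is incorrect.} The trace of an involution on a permutation basis is simply the number of fixed basis elements---there is no alternating sign involved there. The $-1$ enters on the other side: the geometric action of $\omega$ on $H^*(\cB_N)$ is \emph{not} the Springer action of $w_0$; the two differ by a sign $(-1)^i$ on $H^{2i}$ (this is Hotta--Springer). Since $w_0$ has cycle type $\rho_2(n)$, one gets $\tr(\omega,H^*(\cB_N))=\sum_i(-1)^i\tr_{\textup{Springer}}(w_0,H^{2i}(\cB_N))=\fgr^\lambda_{\rho_2(n)}(-1)$. You have conflated the geometric and Springer actions.

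\textbf{Minor points.} The relevant $K$-group whose dimension matches the number of left cells is the \emph{non}-equivariant $K(\cB_N)$, not $K^{L\times\G_m}(\cB_N)$; the equivariant version corresponds to $\jr_\twc$ itself and is much larger. Also, the $\br\tau$-action is already absorbed in the passage from $\extW$ to $\affW$ and plays no further role.
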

It is equivalent to \cite[Theorem 4.2]{cfkly}, which is proved by a more general combinatorial result. In this paper we give another proof of this theorem using representation theory.

\subsection{Affine Weyl groups of type $C$}
Let $\tc{\affW}$ be the set of elements in $\affW$ fixed by $\omega$ and $L$ be the restriction of $l: \affW \rightarrow \bN$ to $\tc{\affW}$. Then $\tc{\affW}$ is the affine Weyl group of type $C$ whose simple reflections are given by
\begin{align*}
&s_0, s_1s_{n-1}, \ldots, s_{\frac{n-3}{2}}s_{\frac{n+3}{2}}, s_{\frac{n-1}{2}}s_{\frac{n+1}{2}}s_{\frac{n-1}{2}} &&\textup{ when $n$ is odd, and}
\\&s_0, s_1s_{n-1}, \ldots, s_{\frac{n-2}{2}}s_{\frac{n+2}{2}}, s_{\frac{n}{2}} &&\textup{ when $n$ is even}.
\end{align*}
The pair $(\tc{\affW}, L)$ is said to be in the quasisplit case in the sense of \cite{lus14:hecke}. There exists a strong connection between cells in $\affW$ and $(\tc{\affW}, L)$ as the following lemma shows. 
\begin{lem} Suppose that $\twc$ (resp. $\Gamma$) is a two-sided (resp. left) cell of $\extW$. Thus $\twc \cap \affW$ (resp. $\Gamma \cap \affW$) is a two-sided (resp. left) cell of $\affW$.
\begin{enumerate}[label=\textup{(\alph*)}]
\item $\Gamma \cap \tc{\affW}$ is nonempty if and only if $\Gamma$ is stable under $\omega$.
\item If $\Gamma \cap \tc{\affW}$ is nonempty, then it is also a left cell of $(\tc{\affW},L)$.
\item $\twc$ is always $\omega$-stable, and $\twc\cap \tc{\affW}$ is a (nonempty) union of two-sided cells of $(\tc{\affW},L)$.
\end{enumerate}
\end{lem}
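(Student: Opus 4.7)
The plan is to reduce each part of the lemma to general results on cells in the quasisplit setting developed in \cite{lus14:hecke}, together with the combinatorial description of $\omega$-stable cells via Shi's correspondence as treated in \cite{cfkly}.

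For part (a), the forward direction is immediate: any $x \in \Gamma \cap \tc{\affW}$ is fixed by $\omega$ and thus lies in $\Gamma \cap \omega(\Gamma)$, forcing $\omega(\Gamma) = \Gamma$ because $\omega$ permutes left cells. The converse is the heart of the matter, and I would pursue it along one of two routes. The combinatorial route uses Shi's generalized Robinson--Schensted correspondence (see \cite{cfkly} and the discussion in Section \ref{section:involution}): elements of a left cell correspond to suitable combinatorial data, the induced involution is the affine Sch\"utzenberger involution, and a direct verification produces an $\omega$-fixed element in every $\omega$-stable cell. The representation-theoretic alternative uses the Kazhdan--Lusztig basis of the Hecke algebra of $\affW$ equipped with the $\omega$-induced action: averaging the KL basis elements supported on $\Gamma$ over $\omega$-orbits yields a nonzero element of the $\omega$-invariant part of the cell module, whose support must then lie in $\Gamma \cap \tc{\affW}$.

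For part (b), given that $\Gamma$ is $\omega$-stable, the assertion that $\Gamma \cap \tc{\affW}$ is a left cell of $(\tc{\affW}, L)$ follows from the comparison theorem of \cite{lus14:hecke} in the quasisplit case: the twisted Kazhdan--Lusztig basis of $(\tc{\affW}, L)$, with the weight function $L$ arising naturally from the orbit structure of $\omega$ on the simple reflections, is obtained by $\omega$-averaging KL basis elements of $\affW$, and the induced left preorder on $\tc{\affW}$ agrees with the restriction of the left preorder on $\affW$. Part (c) combines Lemma \ref{lem:2sstab} (which gives $\omega$-stability of $\twc$) with part (a) applied to any $\omega$-stable left cell inside $\twc$ (which must exist by a counting argument on the $\omega$-action on the finite set of left cells in $\twc$), giving non-emptiness of $\twc \cap \tc{\affW}$; the two-sided analogue of the comparison used for (b) then shows that this intersection decomposes into two-sided cells of $(\tc{\affW}, L)$.

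The main obstacle is the backward direction of (a): guaranteeing that an $\omega$-stable left cell meets $\tc{\affW}$. Both the combinatorial verification (producing $\omega$-fixed tableaux in \cite{cfkly}) and the quasisplit KL-basis comparison (via \cite{lus14:hecke}) require careful bookkeeping of conventions—in particular the parameters in the twisted Hecke algebra and the precise relation between row-standard tableau data and left-cell elements of $\affW$. Once this key step is in place, parts (b) and (c) follow uniformly from the compatibility of Kazhdan--Lusztig bases and of the cell preorders between $\affW$ and $(\tc{\affW}, L)$.
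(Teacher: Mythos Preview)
Your treatment of (b) matches the paper's: both invoke the quasisplit theory of \cite{lus14:hecke} (specifically the boundedness of tame affine Weyl groups and \cite[Lemma 16.21]{lus14:hecke}). However, the backward direction of (a) and the nonemptiness in (c) in your proposal contain genuine gaps, and the paper handles both by cleaner and more direct means.

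For (a), your averaging route does not work as written. Averaging $c_x$ over an $\omega$-orbit produces an $\omega$-invariant element of the Hecke algebra, but its support is the orbit $\{x,\omega(x)\}\subset\Gamma$, not a subset of $\Gamma\cap\tc{\affW}$; invariance of a linear combination says nothing about $\omega$-fixedness of the individual group elements. Your combinatorial route via Shi's correspondence could be made to work, but it is heavy machinery. The paper instead uses the set $\D$ of distinguished involutions: one knows $\omega(\D)=\D$ and that $\D\cap\Gamma$ is a singleton for every left cell $\Gamma$, so if $\omega(\Gamma)=\Gamma$ then the unique element of $\D\cap\Gamma$ is automatically $\omega$-fixed. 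This one-line argument replaces all of your proposed work.

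For (c), your ``counting argument'' for the existence of an $\omega$-stable left cell in $\twc$ is not valid: an involution on a finite set need not have a fixed point (the number of left cells in $\twc_\lambda$ is $n!/(\lambda_1!\cdots\lambda_r!)$, which is typically even). The paper instead observes that the \emph{canonical} left cell of \cite{luxi88} inside $\twc$ is manifestly $\omega$-stable, and then applies (a). The final clause of (c) is, as you say, the two-sided analogue from \cite{lus14:hecke}.
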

\begin{proof}
For (a), one direction is clear since $(\affW)^\omega = \tc{\affW}$. For the other direction, first note that there exists $\D\subset \affW$ (the set of distinguished involutions), such that $\omega(\D) = \D$ and $\D\cap \Gamma$ consists of only one element for each $\Gamma$. Therefore, if $\omega(\Gamma) = \Gamma$ then the unique element $\D\cap \Gamma$ is fixed by $\omega$, and thus $\tc{\affW} \cap \Gamma$ is nonempty.


For (b), we rely on the results of \cite{lus14:hecke}. Since $\affW$ is tame (see \cite[1.11, 1.15]{lus14:hecke}), it is bounded in the sense of \cite[13.2]{lus14:hecke} (also see \cite[13.4]{lus14:hecke} and \cite[Theorem 7.2]{lus85:cell} for its proof). Therefore, the argument in \cite[Chapter 16]{lus14:hecke} is applicable and (b) follows from \cite[Lemma 16.21]{lus14:hecke}.

For (c), the first part is exactly Lemma \ref{lem:2sstab}.
To show that $\twc\cap \tc{\affW}$ is nonempty, we consider the canonical left cell $\Gamma \subset \twc$ defined in \cite{luxi88}. Then clearly $\Gamma$ is $\omega$-stable, thus $\twc \cap \tc{\affW} \supset \Gamma \cap \tc{\affW}$ is nonempty by (a). Finally, $\twc \cap \tc{\affW}$ is a union of two-sided cells of $(\tc{\affW}, L)$ by \cite[Lemma 16.20(b)]{lus14:hecke} (and its right analogue).
\end{proof}
\begin{rmk} In general, $\twc \cap \tc{\affW}$ is not a single two-sided cell. For example, if $n=4$ and $m=2$, there are 6 two-sided cells in $(\tc{\affW}, L)$, but $\extW$ has only 5 two-sided cells. Indeed, the second highest two-sided cell of $\extW$ (which contains $S$) splits into two two-sided cells $\{s_0\}$, $\{s_2\}$ of $\tc{\affW}$. (ref. \cite[p.40]{gui08})
\end{rmk}

Now the following corollary is an immediate consequence.
\begin{cor} \label{cor:equiv} Let $\twc$ be a two-sided cell of $\extW$. Then the number of left cells of $(\tc{\affW},L)$ in $\twc \cap \tc{\affW}$ is equal to the number of left cells of $\extW$ in $\twc$ fixed by the involution $\omega$.
\end{cor}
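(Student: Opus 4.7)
The plan is to exhibit a direct bijection between the two sets being counted, using the preceding lemma. Specifically, I would consider the assignment
\[
\Phi: \Gamma \longmapsto \Gamma \cap \tc{\affW},
\]
where $\Gamma$ ranges over the left cells of $\extW$ contained in $\twc$ and fixed by $\omega$, and verify that $\Phi$ is a well-defined bijection onto the set of left cells of $(\tc{\affW},L)$ contained in $\twc \cap \tc{\affW}$.

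First I would note that $\Phi$ lands in the correct set: if $\omega(\Gamma) = \Gamma$, then by part (a) of the lemma $\Gamma \cap \tc{\affW}$ is nonempty, and then by part (b) it is a left cell of $(\tc{\affW},L)$; moreover it is clearly contained in $\twc \cap \tc{\affW}$. Injectivity is automatic because distinct left cells of $\extW$ are disjoint, so their intersections with $\tc{\affW}$ are disjoint as well (and nonempty by construction).

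For surjectivity, pick any left cell $\Gamma'$ of $(\tc{\affW}, L)$ with $\Gamma' \subset \twc \cap \tc{\affW}$, and choose an element $w \in \Gamma'$. Since $w \in \affW$, it lies in a unique left cell of $\affW$, which by the bijection recalled in Section \ref{sec:cells} is of the form $\Gamma \cap \affW$ for a unique left cell $\Gamma$ of $\extW$; and since $w \in \twc$, we have $\Gamma \subset \twc$. Because $w \in \tc{\affW}$, the intersection $\Gamma \cap \tc{\affW}$ is nonempty, so by part (a) $\Gamma$ is $\omega$-stable. Then by part (b), $\Gamma \cap \tc{\affW}$ is a left cell of $(\tc{\affW}, L)$ containing $w$, and by part (c), $\twc \cap \tc{\affW}$ is a union of such left cells; since $\Gamma'$ is the unique left cell of $(\tc{\affW},L)$ in $\twc \cap \tc{\affW}$ containing $w$, we conclude $\Phi(\Gamma) = \Gamma \cap \tc{\affW} = \Gamma'$.

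The main point to be careful about is the surjectivity step, i.e., making sure that every left cell of $(\tc{\affW},L)$ inside $\twc \cap \tc{\affW}$ actually arises as $\Gamma \cap \tc{\affW}$ for some $\omega$-stable left cell $\Gamma$ of $\extW$. This is essentially guaranteed by parts (a) and (c) of the lemma together with the fact that each element of $\affW$ lies in a unique left cell of $\extW$, so no serious obstacle remains beyond assembling these inputs carefully.
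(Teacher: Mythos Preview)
Your proposal is correct and is exactly the natural unpacking of what the paper does: the paper gives no argument beyond declaring the corollary ``an immediate consequence'' of the preceding lemma, and your bijection $\Gamma \mapsto \Gamma \cap \tc{\affW}$ is the obvious way to make that immediate consequence explicit. One small remark: the appeal to part (c) in your surjectivity step is unnecessary, since the left cells of $(\tc{\affW},L)$ already partition $\tc{\affW}$, so any two of them sharing the element $w$ must coincide.
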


\subsection{Relation to Domino tableaux and Springer theory}
Let $\tc{G}$ be $SO_{n}$ (resp. $Sp_{n}$) over $\bC$ if $n$ is odd (resp. even) and $\tc{\fg}$ be its Lie algebra. Regard $\tc{W} \colonequals W^\omega$ as the Weyl group of $\tc{G}$ in a natural way. For a nilpotent element $\tc{N} \in \tc{\fg}$, let  $A_{\tc{N}}$ be the component group of the stabilizer of $\tc{N}$ in $\tc{G}$. Let $\tc{\cB}$ be the flag variety of $\tc{G}$ and $\tc{\cB}_{\tc{N}}$ be the Springer fiber of $\tc{N}$.

There exists a canonical bijection between two-sided cells in $\tc{W}$ (with equal parameters) and special nilpotent orbits in $\tc{\fg}$. 
Pick a two-sided cell $\tc{\twc} \subset \tc{W}$ and let $\tc{N}\in \tc{\fg}$ be the nilpotent element in the corresponding special nilpotent orbit. Also let $\lambda$ be the Jordan type of $\tc{N}$. Then it follows from the results of Barbasch-Vogan \cite{bv82} and Garfinkle \cite{gar90, gar92, gar93} that the number of left cells in $\tc{\twc}$ is equal to that of standard domino tableaux of shape $\lambda$. It is also the same as the number of $A_{\tc{N}}$-orbits in the set of irreducible components of $\tc{\cB}_{\tc{N}}$, see \cite{mcg99, mcg00}.


This statement has an ``unequal'' analogue as follows. If we restrict $L: \tc{\affW} \rightarrow \bN$ to $\tc{W}$, then again $(\tc{W}, L|_{\tc{W}})$ is in the quasisplit case in the sense of \cite{lus14:hecke}. Thus similarly to the affine case above, the left cells of $(\tc{W}, L|_{\tc{W}})$ are precisely an intersection of $\tc{W}$ and some left cell of $W$ fixed by $\omega$ (see \cite{lus83:left}). For a partition $\lambda \vdash n$, let $\twc_\lambda \subset W$ be the two-sided cell of $W$ parametrized by $\lambda$. Then the number of left cells of $(\tc{W}, L|_{\tc{W}})$ contained in $\twc_\lambda \cap \tc{W}$ is equal to the number of standard domino tableaux of shape $\lambda$. In particular, if $\lambda$ is the Jordan type of a nilpotent element $\tc{N} \in \tc{\fg}$ (not necessarily special), then it is again the same as the number of $A_{\tc{N}}$-orbits in the set of irreducible components of $\tc{\cB}_{\tc{N}}$. 

The statement above also has an ``affine'' analogue. For simplicity, let us assume that $n$ is odd, thus $\tc{G} = SO_{n}$ is of type $B$. Then $\tc{\affW}$ is naturally the affine Weyl group of the Langlands dual of $\tc{G}$. There exists a canonical bijection between nilpotent orbits in $\tc{\fg}$ and two-sided cells of $\tc{\affW}$ (with equal parameters) defined in \cite{lus89:cell}. Pick a two-sided cell $\tc{\twc} \subset \tc{\affW}$ and let $\tc{N} \in \tc{\fg}$ be a nilpotent element in the corresponding nilpotent orbit. Then, a weaker version of \cite[Conjecture 10.5]{lus89:cell}, proved in \cite{bez04, beos04, bfo09}, implies that the number of left cells in $\tc{\twc}$ is given by the dimension of $(H^*(\tc{\cB}_{\tc{N}}))^{A_{\tc{N}}}$.

The main theorem in this paper should be considered as an ``affine unequal" analogue of the first statement. 
Again let $G$ be $SO_n$ or $Sp_n$ depending on the parity of $n$, and let $\tc{N} \in \tc{\fg}$ be a nilpotent element of Jordan type $\lambda \vdash n$. Let $\twc_\lambda \subset \extW$ be the two-sided cell of $\extW$ parametrized by $\lambda$. Then the result of \cite{kim:total} together with Theorem \ref{thm:main} implies that the number of left cells of $(\tc{\affW}, L)$ contained in $\twc_\lambda\cap \tc{\affW}$ is equal to the Euler characteristic of $\tc{\cB}_{\tc{N}}$, i.e. the dimension of $H^*(\tc{\cB}_{\tc{N}})$.

\section{Proof of the main theorem} \label{section:proof}
\subsection{Reduction to strict partitions}
First, we claim that in order to prove the main theorem it suffices only to consider the case when a two-sided cell $\twc \subset \extW$ corresponds to a strict partition. This follows from two propositions below.

\begin{prop} \label{prop:str} Suppose that $\lambda=(\lambda_1, \ldots, \lambda_r)\vdash n-2k$ is a partition for some $k\geq 1$. Let $\varphi(\lambda\cup(k,k))$ be the number of left cells in $\twc_{\lambda\cup (k,k)}$ fixed by $\omega$. We define $\varphi(\lambda)$ similarly (by replacing $n$ with $n-2k$, etc.) Then, for $m = \floor{n/2}$ we have
$$\varphi(\lambda\cup(k,k))=\binom{m}{k} 2^k\varphi(\lambda).$$
\end{prop}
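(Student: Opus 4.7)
The plan is to use Shi's generalized Robinson-Schensted correspondence, which places left cells of $\extW$ in each two-sided cell $\twc_\mu$ in bijection with row-standard Young tableaux of shape $\mu$, together with the explicit combinatorial description of the affine Sch{\"u}tzenberger involution on RSYT to be developed in Section \ref{sec:comb}. Under this translation, $\varphi(\mu)$ becomes the number of RSYT of shape $\mu$ fixed by the corresponding involution on tabloids, and the desired recursion becomes a combinatorial identity.

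For a RSYT $T$ of shape $\lambda \cup (k,k)$, I would decompose the entries of $T$ using the pairing on $\{1,\ldots,n\}$ given by $i \mapsto n+1-i$, which has $m = \floor{n/2}$ two-element orbits together with one fixed point when $n$ is odd. The key structural claim is that, if $T$ is fixed by the combinatorial involution, then the entries occupying the two rows of length $k$ form $k$ complete pairs $\{i,n+1-i\}$, with exactly one element of each pair in each row, and moreover the residual tabloid of shape $\lambda$ on the remaining entries is fixed by the analogous involution. The number of choices for the two $k$-rows is then $\binom{m}{k}\cdot 2^k$: pick $k$ of the $m$ pairs, and for each decide which element sits in the upper row. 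After relabeling the remaining $n-2k$ entries as $\{1,\ldots,n-2k\}$ via the order-preserving bijection, the residual tabloid contributes a factor of $\varphi(\lambda)$, yielding $\varphi(\lambda\cup(k,k)) = \binom{m}{k}\cdot 2^k \cdot \varphi(\lambda)$.

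The principal obstacle is establishing the structural claim above: that the combinatorial involution decomposes cleanly across the $(k,k)$-part and the $\lambda$-part. Informally, one must show the involution swaps the two rows of length $k$ (while acting on entries by $i \mapsto n+1-i$) and acts independently on the rows forming $\lambda$. This requires a careful inspection of the combinatorial rules defining the involution on tabloids with repeated row lengths, together with the verification that removing a pair of rows of equal length preserves the fixed-point structure. Establishing this compatibility is the heart of the argument and depends on the explicit combinatorial model to be developed in Section \ref{sec:comb}.
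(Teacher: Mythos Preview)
Your high-level strategy matches the paper's: translate $\varphi(\mu)$ into a count of $\omega$-fixed RSYT of shape $\mu$ via Shi's correspondence, and exploit the pairing $i \leftrightarrow n+1-i$ to extract the factor $\binom{m}{k}2^k$. But your ``structural claim'' --- that on shape $\lambda\cup(k,k)$ the involution simply swaps the two rows of length $k$ (with entries complemented) and acts independently on the $\lambda$-rows --- is not true as stated, and this is where your proposal has a genuine gap rather than just a deferred verification. The combinatorial description of $\omega$ (Proposition~\ref{prop:rmat}) is: rotate by $180^\circ$, replace $i$ by $n+1-i$, then apply combinatorial $R$-matrices to restore the original shape. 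When the two $k$-rows sit in the interior of $\lambda\cup(k,k)$, those $R$-matrices pass them across rows of different lengths and genuinely mix entries between the $k$-rows and the $\lambda$-rows. So the involution does not in general respect your proposed decomposition of $T$ into its $k$-rows and its $\lambda$-rows.

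The paper's fix is a single extra move that makes the whole thing transparent: one first applies $R$-matrices to pass bijectively from $RSYT(\lambda\cup(k,k))$ to $RSYT((k,\lambda_1,\dots,\lambda_r,k))$. The key lemma (Lemma~\ref{lem:comm2}) is that $\omega$ commutes with every $\Rmat_i$, so this rearrangement preserves the number of $\omega$-fixed points. With the two $k$-rows sitting at the very top and bottom, the $180^\circ$ rotation sends them to each other directly, and the $R$-matrices needed to restore the shape live entirely inside the middle $\lambda$-block. Now your structural claim \emph{does} hold on the rearranged shape: a fixed point has first and last rows of the form $(a_1,\dots,a_k)$ and $(n+1-a_k,\dots,n+1-a_1)$ with these $2k$ entries disjoint, giving $\binom{m}{k}2^k$ choices, and the renormalized middle is an $\omega$-fixed element of $RSYT(\lambda)$. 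What you were missing is this rearrangement trick and the commutation lemma that justifies it.
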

Its proof relies on combinatorics, which we postpone until Section \ref{sec:comb}. We refer readers to \cite{cfkly} for detailed combinatorial descriptions of the affine Sch{\"u}tzenberger involution.

\begin{prop} \label{prop:grind}Suppose that $\lambda=(\lambda_1, \ldots, \lambda_r) \vdash n-2k$ is a partition for some $k\geq 1$. Then, 
$$\fgr^{\lambda\cup(k,k)}_{\rho_2(n)}(-1)=\binom{m}{k} 2^k\fgr^{\lambda}_{\rho_2(n-2k)}(-1)$$
where $m = \floor{n/2}$ and $\fgr^{\lambda\cup(k,k)}_{\rho_2(n)}(t)$, $\fgr^{\lambda}_{\rho_2(n-2k)}(t)$ are the corresponding Green polynomials.
\end{prop}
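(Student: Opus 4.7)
The plan is to prove this as a symmetric-function identity via the defining characterization of Green polynomials. Recall (with an appropriate normalization) that $\fgr^\lambda_\mu(t)$ is the coefficient of the Hall--Littlewood polynomial $P_\lambda(x;t)$ in the power sum $p_\mu(x)$. Since $p_{\rho_2(n)} = p_2^m$ for $n = 2m$ (and $p_2^m\cdot p_1$ for $n = 2m+1$, with $m = \floor{n/2}$), the Green polynomial $\fgr^\lambda_{\rho_2(n)}(t)$ can be computed by iterating the Morris--Murnaghan--Nakayama-type recursion for Hall--Littlewood polynomials that peels off one factor of $p_2$ at a time, corresponding to removing a single $2$-ribbon (domino) from $\lambda$ with an explicit weight $\psi_{\lambda/\nu}(t)$.

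First I would compute the specialization $\psi_{\lambda/\nu}(-1)$ directly from the formulas in Macdonald (\emph{Symmetric Functions and Hall Polynomials}, III). The key outcome is that at $t = -1$ the weight vanishes for dominoes of one orientation and equals $\pm 2$ for the other. Iterating $m$ times then expresses $\fgr^{\lambda\cup(k,k)}_{\rho_2(n)}(-1)$ as a signed sum over sequences of $m$ domino removals from $\lambda\cup(k,k)$, each surviving sequence weighted by a power of $2$ together with a Murnaghan--Nakayama sign. The next step is to decompose each surviving sequence according to whether each of its $m$ dominoes lies entirely within the ``added'' pair of rows of length $k$ or entirely within the ``original'' shape $\lambda$. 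Granting that no surviving domino straddles the horizontal boundary between these two regions---which is the main geometric content---the sum factors into three pieces: a choice of which $k$ of the $m$ time-steps are devoted to the $(k,k)$-region, producing $\binom{m}{k}$; the $(k,k)$-contribution, which evaluates to $2^k$ by direct enumeration on the $2\times k$ rectangle (which admits a unique surviving-orientation ribbon tiling with each ribbon contributing $2$); and the $\lambda$-contribution, which is $\fgr^\lambda_{\rho_2(n-2k)}(-1)$ by definition.

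The main obstacle will be the boundary analysis, particularly in the degenerate case where $\lambda$ already contains parts of length $k$, so that the decomposition $\lambda\cup(k,k) = \lambda\sqcup(k,k)$ is not canonical. I expect this to be handled by the symmetric nature of the outer-corner rule in Murnaghan--Nakayama, together with the observation that any boundary-straddling domino would have either the vanishing orientation or would fail to be a valid outer ribbon. Once this boundary analysis is in place, the factorization above is immediate, and the odd case $n = 2m+1$ follows by appending a single trivial $p_1$-peeling step that contributes a factor of $1$ and reduces everything to the even case.
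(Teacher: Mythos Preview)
Your approach diverges from the paper's and, as written, rests on an incorrect picture of the recursion. Multiplication by $p_r$ in the Hall--Littlewood $P$-basis is \emph{not} supported only on border strips: for instance, the coefficient of $P_{(2,1)}$ in $p_2 P_{(1)}$ equals $t$ (hence $-1$ at $t=-1$), even though $(2,1)/(1)$ consists of two disconnected cells and is not a ribbon. Even among genuine $2$-ribbons the weight depends on the ambient shape rather than on orientation alone: the coefficient of $P_{(2,2)}$ in $p_2 P_{(2)}$ (a horizontal domino placed in a new row) is $1+t$, vanishing at $t=-1$, whereas the coefficient of $P_{(2)}$ in $p_2 P_{\emptyset}$ (also a horizontal domino) is $1$. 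So the dichotomy ``one orientation vanishes, the other contributes $\pm 2$'' is false as stated, and with it the clean separation of surviving domino sequences into a $(k,k)$-block and a $\lambda$-block. A corrected version of your boundary analysis would in effect have to establish a $2$-quotient--type factorization of $Q'_\lambda$ at $t=-1$, which is essentially the Lascoux--Leclerc--Thibon identity the paper takes as input.

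For comparison, the paper works entirely on the symmetric-function side and avoids any ribbon combinatorics: it quotes the identity $Q'_{\lambda\cup(k,k)}(-1) = (-1)^k Q'_\lambda(-1)\, s_k[p_2]$ from \cite{llt94}, pairs both sides against $p_{\rho_2(n)}$, observes that only the $p_{\rho_2(n-2k)}$-term of $Q'_\lambda(-1)$ survives, and finishes with the elementary computation $\langle p_{\rho_2(n-2k)}\, s_k[p_2],\, p_{\rho_2(n)}\rangle = z_{\rho_2(n)}/k!$ together with $z_{\rho_2(n)}/(k!\,z_{\rho_2(n-2k)}) = \binom{m}{k} 2^k$. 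This is a few lines once the LLT identity is granted; your route, made rigorous, would amount to a combinatorial reproof of that identity in the special case at hand.
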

\begin{proof} Let $Q'_\lambda(t)$ be the modified Hall-Littlewood $Q'$-function. (See \cite{kir98} for its definition and properties.)  Note that $Q'_\lambda(t) = t^{b(\lambda)} \sum_{\rho\vdash |\lambda|} z_\rho^{-1}\fgr^\lambda_\rho(t^{-1})p_\rho$ where $b(\lambda) = \sum_{i\geq 1} (i-1)\lambda_i$, $z_\rho = \prod_{i\geq 1} i^{m_i} m_i!$ for $\rho = (1^{m_1}2^{m_2}\cdots)$, and $p_\rho$ is a power symmetric function. Thus we have
$$\br{Q'_{\lambda \cup (k,k)}(-1), p_{\rho_2(n)}} = (-1)^{b(\lambda \cup (k,k))}\fgr^{\lambda \cup (k,k)}_{\rho_2(n)}(-1) =(-1)^{b(\lambda)+k}\fgr^{\lambda \cup (k,k)}_{\rho_2(n)}(-1)$$
where $\br{\ , \ }$ is the usual scalar product on the ring of symmetric functions.

On the other hand, by \cite[Theorem 2.1 and 2.2]{llt94} we have $Q'_{\lambda \cup (k,k)}(-1) = (-1)^kQ'_{\lambda}(-1)s_k[p_2]$, where $s_k$ is the Schur function corresponding to the partition $(k)$, $p_2$ is the power symmetric function corresponding to the partition $(2)$, and $s_k[p_2]$ is their plethysm. Therefore, $\br{Q'_{\lambda \cup (k,k)}(-1), p_{\rho_2(n)}}$ is also equal to (here we use orthogonality of power symmetric functions with respect to $\br{\ , \ }$)
\begin{align*}
(-1)^k\br{Q'_{\lambda}(-1)s_k[p_2],p_{\rho_2(n)}}&=(-1)^{b(\lambda)+k}\frac{\fgr^\lambda_{\rho_2(n-2k)}(-1)}{z_{\rho_2(n-2k)}}\br{p_{\rho_2(n-2k)}s_k[p_2],p_{\rho_2(n)}}
\\&=(-1)^{b(\lambda)+k}\frac{\fgr^\lambda_{\rho_2(n-2k)}(-1)}{z_{\rho_2(n-2k)}k!}\br{p_{\rho_2(n)},p_{\rho_2(n)}}
\\&=(-1)^{b(\lambda)+k}\frac{\fgr^\lambda_{\rho_2(n-2k)}(-1)}{z_{\rho_2(n-2k)}k!}z_{\rho_2(n)}
\\&=(-1)^{b(\lambda)+k}\frac{m!2^k}{(m-k)!k!}\fgr^\lambda_{\rho_2(n-2k)}(-1).
\end{align*}
Hence the result follows.
\end{proof}
Combining two propositions above, we see that if the main theorem is true for $\twc_\lambda$, then it is also true for $\twc_{\lambda \cup (k,k)}$. Thus by inductive argument, the main theorem is valid if and only if it is valid for strict partitions.

\begin{rmk} We believe that this part is not essential to the proof of the main theorem; it is likely that argument in Section \ref{sec:morph} can be applied to the general cases without assuming that the corresponding partition is strict. However, this assumption is still useful as it simplifies our proof.
\end{rmk}


\subsection{Asymptotic Hecke algebra and the canonical basis of $K(\cB_N)$} \label{sec:morph}
From now on, we fix a two-sided cell $\twc=\twc_\lambda \subset \extW$ where $\lambda$ is strict. Let $N \in \fg$ be a nilpotent element of Jordan type $\lambda$. By \cite[p.398]{car93}, the reductive part of $Z_G(N)$ (the stabilizer of $N$ in $G$) is a torus isomorphic to $(\bC^\times)^{l(\lambda)}$, which we denote by $F_\twc$. The idea we pursue here is motivated by the conjecture of Lusztig relating the asymptotic Hecke algebra $\jr_\twc$ attached to $\twc$ and the $F_\twc$-equivariant $K$-theory of a certain finite set, see \cite[Conjecture 10.5]{lus89:cell}.

We recall some results of \cite{bez16}. Let $D_{I^0I^0}$ be the category defined in \cite[p.4]{bez16} and $\cP_{I^0I^0}$ be its subcategory of perverse sheaves. Also, define $\tilde{\fg} \colonequals \{ (X, gB) \in \fg \times \cB \mid \Ad(g)^{-1}X \in \fb\}$ equipped with the obvious projection $\tilde{\fg} \rightarrow \fg$. Then we have a natural equivalence of categories \cite[Theorem 1]{bez16}
$$D_{I^0I^0} \simeq D^b(\cP_{I^0I^0}) \simeq D^b(Coh^G_{\cN}(\tilde{\fg}\times_\fg \tilde{\fg})),$$
where $Coh^G_{\cN}(\tilde{\fg}\times_\fg \tilde{\fg})$ is the category of $G$-equivariant coherent sheaves which are set-theoretically supported on the nilpotent cone $\cN\subset \fg$. (Here we identify $G$ with its Langlands dual.) This isomorphism respects the convolution structure on both sides.

Following \cite[11.2]{bez16}, it induces a canonical isomorphism
$$D_{I^0I^0} \simeq D^b(\cA \otimes_{\cO(\fg)}\cA-\md^G_\cN),$$
where $\cA \otimes_{\cO(\fg)}\cA-\md^G_\cN$ is the category of finitely generated $G$-equivariant $(\cA \otimes_{\cO(\fg)}\cA)$-modules that are set-theoretically supported on $\cN \subset \fg$. Here, $\cO(\fg)$ is the coordinate ring of $\fg$ and $\cA$ is a noncommutative Grothendieck resolution of $\fg \times_{\fh/W}\fh$ defined in \cite[1.5]{bemi13}. (See also \cite{bez06}.)

Recall the bijection between two-sided cells and nilpotent orbits in $\fg$ in \cite{lus89:cell}. This bijection is order-preserving \cite{bez09}, and each order induces a filtration on each of two categories above. More precisely, let $D_{I^0I^0,\leq \twc}$ (resp. $D_{I^0I^0,<\twc}$) be the thick subcategory of $D_{I^0I^0}$ generated by irreducible objects $IC_w \in \cP_{I^0I^0}$ for $w \in \twc' \leq \twc$ (resp. $w \in \twc' <\twc$), where $IC_w$ is defined as in \cite[Theorem 55]{bez16}. Then the quotient $D_{I^0I^0,\leq \twc}/D_{I^0I^0,<\twc}$, denoted $D_{I^0I^0,\twc}$, is well-defined. Likewise, for a nilpotent orbit $O \subset \fg$ let $D^b(\cA \otimes_{\cO(\fg)}\cA-\md^G_{\leq O})$ (resp. $D^b(\cA \otimes_{\cO(\fg)}\cA-\md^G_{< O})$) be the full subcategory of $D^b(\cA \otimes_{\cO(\fg)}\cA-\md^G_{\cN})$ consisting of complexes whose cohomology is set-theoretically supported on $\overline{O}$ (resp. $\overline{O} - O$). Then the quotient $D^b(\cA \otimes_{\cO(\fg)}\cA-\md^G_{\leq O})/D^b(\cA \otimes_{\cO(\fg)}\cA-\md^G_{< O})$, denoted $D^b(\cA \otimes_{\cO(\fg)}\cA-\md^G_{O})$, is well-defined.

\cite[Theorem 55]{bez16} states that the isomorphism above respects the filtrations on both sides. In particular, if we set $O$ to be the orbit of $N\in\fg$, then we have a canonical isomorphism
$$D_{I^0I^0,\twc} \simeq D^b(\cA \otimes_{\cO(\fg)}\cA-\md^G_O).$$
Also it sends the perverse $t$-structure on the left to the usual $t$-structure on the right shifted by $a(\twc)=\dim \cB_N$. (See \cite{lus85:cell} for the definition of Lusztig's $a$-function.) Now consider a full subcategory $\I_\twc$ of $D_{I^0I^0,\twc}$ whose objects are (the images of) direct sums of irreducible perverse sheaves and their shifts in $D_{I^0I^0,\leq \twc}$ (under the quotient map). Under the isomorphism above, it is transferred to the full subcategory of $D^b(\cA \otimes_{\cO(\fg)}\cA-\md^G_O)$ whose objects are (the images of) direct sums of irreducible $(\cA \otimes_{\cO(\fg)}\cA)$-modules set-theoretically supported on $\overline{O}$ and their shifts (under the quotient map), which we denote by $\I_O$. Since all the irreducible $(\cA \otimes_{\cO(\fg)}\cA)$-modules set-theoretically supported on $\overline{O}$ are also scheme-theoretically supported on $\overline{O}$, we may also identify $\I_O$ with the full subcategory of $D^b(\cA_N \otimes \cA_N-\md^{Z_G(N)})$ (the bounded derived category of finitely generated $Z_G(N)$-equivalent $(\cA_N \otimes \cA_N)$-modules) whose objects are direct sums of irreducible objects and their shifts. Here, $\cA_N$ is the fiber of $\cA$ at $N \in \fg$.

From this description above, we have canonical isomorphisms
$$K(\I_\twc) \simeq K(D_{I^0I^0,\twc})\simeq K(D^b(\cA \otimes_{\cO(\fg)}\cA-\md^G_O)) \simeq K(\I_O) \simeq K(D^b(\cA_N \otimes \cA_N-\md^{Z_G(N)})).$$
We impose a $\bC$-algebra structure on each term so that they are canonically isomorphic as $\bC$-algebras. First, Lusztig \cite{lus97:cell} defined the truncated convolution on $K(\I_\twc) \simeq K(D_{I^0I^0,\twc})$, which is the usual convolution followed by applying ${}^p\cH^{a(\twc)}$, i.e. taking $a(\twc)$-th perverse cohomology sheaf. Then $K(\I_\twc)$ equipped with this algebra structure is canonically isomorphic to the asymptotic Hecke algebra $\jr_\twc$ attached to $\twc$ (defined over $\bC$). On the other hand, this also induces a truncated convolution on $K(D^b(\cA \otimes_{\cO(\fg)}\cA-\md^G_O)) \simeq K(\I_O)$, which is defined by the usual convolution followed by applying $\cH^0$, i.e. taking $0$-th cohomology sheaf. (This is clear from the comparison of $t$-structures on $D_{I^0I^0,\twc}$ and $D^b(\cA \otimes_{\cO(\fg)}\cA-\md^G_O).$) Therefore, we have a canonical isomorphism of $\bC$-algebras
$$\jr_\twc \simeq K(D^b(\cA_N \otimes \cA_N-\md^{Z_G(N)})).$$
It is also isomorphic to $K^{F_\twc}(\cA_N \otimes \cA_N-\md)$ since $F_\twc$ is the reductive part of $Z_G(N)$. 

There exists a natural morphism (of $\bC$-vector spaces) 
$$K^{F_\twc}(\cA_N \otimes \cA_N-\md) \rightarrow K(\cA_N \otimes \cA_N-\md)$$
which is induced from the forgetful functor. We claim that this morphism is surjective and its kernel is a two-sided ideal of $K^{F_\twc}(\cA_N \otimes \cA_N-\md)$ (with respect to the truncated convolution), thus it induces a $\bC$-algebra structure on $K(\cA_N \otimes \cA_N-\md)$. Indeed, according to \cite[5.2.3]{bemi13}, every irreducible $(\cA_N \otimes \cA_N)$-module can be lifted to an $F_\twc$-equivariant one and such two lifts are isomorphic up to characters of $F_\twc$. (Here we use the assumption that $\lambda$ is strict and thus $F_\twc$ is a torus.) Also, every irreducible $F_\twc$-equivariant $\cA_N \otimes \cA_N$-module arises in this way. From this, the claim easily follows.

On the other hand, inspired by the conjecture of Lusztig \cite{lus89:cell}, Xi \cite{xi02} proved that $\jr_\twc$ is (non-canonically) isomorphic to $Mat_{\X\times \X}(\jr_{\Gamma \cap \Gamma^{-1}})$, where $\Gamma$ is some fixed left cell in $\twc$, $\jr_{\Gamma \cap \Gamma^{-1}}$ is the asymptotic Hecke algebra attached to $\Gamma \cap \Gamma^{-1}$, and $\X=\frac{n!}{\lambda_1!\cdots\lambda_r!}$ is the number of left cells in $\twc$ which is also equal to the Euler characteristic of $\cB_N$. Furthermore, $\jr_{\Gamma \cap \Gamma^{-1}}$ is isomorphic to $Rep(F_\twc)$, which in our case is the $\bC$-algebra of Laurent polynomials in $l(\lambda)$ variables, say $\bC[x_1^{\pm1},x_2^{\pm1},\ldots,x_{l(\lambda)}^{\pm1}]$.


Let us fix the labeling of the left cells in $\twc$ by $\Gamma_1, \Gamma_2, \ldots, \Gamma_\cX$ once and for all. According to  \cite{xi02}, we may choose an isomorphism $\jr_\twc\simeq Mat_{\X\times \X}(\jr_{\Gamma \cap \Gamma^{-1}})$ such that $(i,j)$-entries in $Mat_{\X\times \X}(\jr_{\Gamma \cap \Gamma^{-1}})$ corresponds to $\Gamma^{-1}_i \cap \Gamma_j$. Now consider the surjection $Mat_{\X\times \X}(\jr_{\Gamma \cap \Gamma^{-1}}) \twoheadrightarrow Mat_{\X\times \X}(\bC)$ induced from the evaluation morphism 
$$\jr_{\Gamma \cap \Gamma^{-1}}\simeq \bC[x_1^{\pm1},x_2^{\pm1},\ldots,x_{l(\lambda)}^{\pm1}] \rightarrow \bC: f(x_1, x_2, \ldots, x_{l(\lambda)}) \mapsto f(1,1, \ldots, 1).$$ Then it is not hard to show that the composition $\jr_\twc \twoheadrightarrow Mat_{\X\times \X}(\bC)$ does not depend on the choice of the isomorphism $\jr_\twc\simeq Mat_{\X\times \X}(\jr_{\Gamma \cap \Gamma^{-1}})$ whenever it respects the fixed labeling of left cells in $\twc$. In other words, there exists a canonical isomorphism $\jr_\twc \twoheadrightarrow Mat_{\X\times \X}(\bC)$ once the order of left cells in $\twc$ is fixed.

So far, we have canonical morphisms
$$
\begin{tikzcd}
\jr_\twc \ar[r, "\simeq"] \ar[d,twoheadrightarrow]&K^{F_\twc}(\cA_N \otimes \cA_N-\md) \ar[d,twoheadrightarrow]
\\Mat_{\X \times \X}(\bC)&K(\cA_N \otimes \cA_N-\md)
\end{tikzcd}
$$
On the other hand, there exists a section map $Mat_{\X \times \X}(\bC)\rightarrow \jr_\twc$ which comes from the $\bC$-algebra structure $\bC \rightarrow \jr_{\Gamma\cap\Gamma^{-1}}$. Composed with $\jr_\twc\simeq K^{F_\twc}(\cA_N \otimes \cA_N-\md) \twoheadrightarrow K(\cA_N \otimes \cA_N-\md)$, it induces a morphism $Mat_{\X \times \X}(\bC) \rightarrow K(\cA_N \otimes \cA_N-\md)$ which makes the above diagram commute. In particular, this morphism is canonical (even though the section map $Mat_{\X \times \X}(\bC)\rightarrow \jr_\twc$ needs not be canonical).

Also, we claim that this morphism is an isomorphism. Indeed, since it is a $\bC$-algebra morphism and $Mat_{\X \times \X}(\bC)$ is simple, $Mat_{\X \times \X}(\bC) \rightarrow K(\cA_N \otimes \cA_N-\md)$ is injective. (This map is not zero as it preserves the multiplicative unit.) Now we recall one of the main results in \cite{bemi13}.
\begin{lem} \label{lem:canbasis} There exists a canonical isomorphism $K(\cA_N-\md) \simeq K(\cB_N)$. Under this isomorphism, the basis $\Irr(\cA_N)$ of $K(\cA_N-\md)$ corresponds to the canonical basis of $K(\cB_N)$ defined in \textup{\cite{lus99:kthy2}}.
\end{lem}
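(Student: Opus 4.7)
The plan is to specialize the main derived equivalence of Bezrukavnikov--Mirkovi\'c~\cite{bemi13} at the fiber over $N$. First, I would recall the global statement: a $G$-equivariant derived equivalence, compatible with localization over $\fg$, between the derived category of $\cA$-modules and the derived category of coherent sheaves on a suitable twisted version of $\tilde{\fg}$ (this is essentially the non-product analogue of the equivalence already invoked in Section~\ref{sec:morph}). Next, I would take the derived fiber at $N \in \fg$; by a standard base-change argument this should yield an equivalence
\[
D^b(\cA_N\textup{-}\md) \simeq D^b(\textup{Coh}(\cB_N)),
\]
and passing to complexified Grothendieck groups on both sides produces the claimed $\bC$-linear isomorphism $K(\cA_N\textup{-}\md) \simeq K(\cB_N)$.

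For the basis identification, I would invoke the second main output of \cite{bemi13}: the tautological t-structure on $D^b(\cA_N\textup{-}\md)$ is transported by the equivalence to an exotic t-structure on $D^b(\textup{Coh}(\cB_N))$ whose simple objects are characterized by a positivity and triangularity property that uniquely pin down Lusztig's canonical basis of $K(\cB_N)$ from \cite{lus99:kthy2}. Since $\Irr(\cA_N)$ is by definition the set of simples in the tautological heart on the $\cA$-side, the induced map on K-theory must send $\Irr(\cA_N)$ bijectively onto Lusztig's canonical basis.

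The main obstacle is this last identification step: matching the exotic simples on $\cB_N$ with Lusztig's canonical basis is not a formal consequence of the equivalence but a substantive positivity/triangularity theorem, relying on a comparison between characters of irreducible $\cA_N$-modules and the combinatorial/axiomatic characterization of Lusztig's basis. A secondary technicality is to verify that passage to the fiber at $N$ commutes with the K-theory construction as expected (for instance, that scheme-theoretic versus set-theoretic support and the noncommutative structure of $\cA_N$ interact correctly, and that the forgetful map from $Z_G(N)$-equivariant to non-equivariant K-theory is handled consistently with Section~\ref{sec:morph}). Both of these points are addressed in \cite{bemi13}, so the plan is ultimately to assemble the cited results and verify compatibilities, rather than to reprove anything from scratch.
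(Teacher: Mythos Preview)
Your proposal is correct in spirit and in fact more detailed than what the paper does: the paper does not prove this lemma at all, but simply states it as a recall of one of the main results of \cite{bemi13}. Your sketch of how the isomorphism and the basis identification arise from the Bezrukavnikov--Mirkovi\'c equivalence is an accurate unpacking of that citation, so there is nothing to correct.
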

In particular, we have $\dim_\bC K(\cA_N \otimes \cA_N-\md) = \X^2 = \dim_\bC Mat_{\X \times \X}(\bC)$,  from which the claim follows. Thus we have a canonical commutative diagram
$$
\begin{tikzcd}
\jr_\twc \ar[r, "\simeq"] \ar[d,twoheadrightarrow]&K^{F_\twc}(\cA_N \otimes \cA_N-\md) \ar[d,twoheadrightarrow]
\\Mat_{\X \times \X}(\bC) \ar[r,"\simeq"]&K(\cA_N \otimes \cA_N-\md)
\end{tikzcd}
$$
Now using the above lemma again, we obtain a canonical isomorphism (of vector spaces)
$$Mat_{\X \times \X}(\bC) \simeq K(\cB_N) \otimes K(\cB_N).$$

\subsection{Involution $\omega$}
We recall the involution $\omega$ on $G$. Clearly, it also induces an involution on $\jr_\twc$ and $Mat_{\X \times \X}(\bC)$. It is clear that there exists a basis  $\{v_{(\Gamma'^{-1}, \Gamma'')} \mid \Gamma', \Gamma'' \textup{ are left cells in } \twc\}$ of $Mat_{\X \times \X} (\bC)$ such that $\omega(v_{(\Gamma'^{-1}, \Gamma'')}) = v_{(\omega(\Gamma'^{-1}), \omega(\Gamma''))}$. Therefore, we have
$$\tr(\omega, Mat_{\X \times \X}(\bC)) = |\{\textup{left cells in } \twc\}^\omega|^2.$$

%
%
On the other hand, if $N\in \fg$ is $\omega$-stable, then $\omega$ also induces an action on $K(\cB_N)$, and by \cite[Lemma 3.2]{hosp77} we have (note that $\rho_2$ is the cycle type of the longest element in $W$)
$$\tr(\omega, K(\cB_N)\otimes K(\cB_N))=\tr(\omega, H^*(\cB_N)\otimes H^*(\cB_N))  = (\fgr^\lambda_{\rho_2(n)}(-1))^2.$$
However, this still makes sense even when $N$ is not $\omega$-stable. Indeed, for any $g \in G$ such that $\Ad(g)(N) = \omega(N)$, we have an isomorphism $\Ad(g)^*: H^*(\cB_{\omega(N)}) \rightarrow H^*(\cB_N)$ which does not depend on the choice of $g$ since $Z_G(N)$ is connected. Also we have a commutative diagram
$$
\begin{tikzcd}[column sep=5em]
H^*(\cB) \ar[r,"\omega"] \ar[d,twoheadrightarrow]& H^*(\cB) \ar[d,twoheadrightarrow] \ar[r,"="]& H^*(\cB) \ar[d,twoheadrightarrow]
\\H^*(\cB_N) \ar[r,"\omega"]& H^*(\cB_{\omega(N)}) \ar[r,"\Ad(g)^*"]&  H^*(\cB_N)
\end{tikzcd}
$$
Thus by identifying $H^*(\cB_N)$ with the quotient of $H^*(\cB)$, the result above is still valid.

Recall that the $\bC$-vector space isomorphism $Mat_{\X \times \X}(\bC) \simeq K(\cB_N) \otimes K(\cB_N)$
is canonical (once the order of left cells in $\twc$ is fixed). As $\omega$ on $Mat_{\X \times \X}(\bC)$ and $K(\cB_N) \otimes K(\cB_N)$ are both induced from the same automorphism $\omega$ on $G$, it follows that this isomorphism is $\omega$-equivariant. \footnote{To be precise, we should check that $\omega$ acts the same way on the Langlands dual of $G$ as on $G$, but it is also true since $\omega$ is self-dual on the root datum of $G$.} In particular, we have
$$|\{\textup{left cells in } \twc\}^\omega|^2=\tr(\omega, Mat_{\X \times \X}(\bC))=\tr(\omega, K(\cB_N)\otimes K(\cB_N)) = ( \fgr^\lambda_{\rho_2(n)}(-1))^2.$$
But since
$$\fgr^\lambda_{\rho_2(n)}(-1)= \tr(\omega, H^*(\cB_N)) = \tr(\omega, K(\cA_N-\md)) =  |\Irr(\cA_N)^\omega|\geq 0,$$
we have $|\{\textup{left cells in } \twc\}^\omega| = \fgr^{\lambda}_{\rho_2(n)}(-1)$. Thus the main theorem is proved.

\begin{rmk} The canonical basis of $K(\cB_N)$ in \cite{lus99:kthy2} is a signed basis, i.e. there is ambiguity on the choice of signs. On the other hand, $\Irr(\cA_N) \subset K(\cA_N-\md)$ is an actual basis, and $\pm\Irr(\cA_N)$ is mapped to Lusztig's canonical basis under the isomorphism $K(\cA_N-\md) \simeq K(\cB_N)$. In our proof, it is crucial that $\omega$ stabilizes not only the signed basis but also $\Irr(\cA_N)$ itself.
\end{rmk}

\section{Proof of Proposition \ref{prop:str}: some combinatorics} \label{sec:comb}
This section is devoted to the proof of Proposition \ref{prop:str}. The argument in this section is explained in \cite{cfkly} in more detail and the proposition also follows from the results therein. However, we still provided its proof here for the sake of completeness.

\subsection{The generalized Robinson-Schensted algorithm}
First, we investigate the connection between left cells in $\extW$ and row-standard Young tableaux under the generalized Robinson-Schensted algorithm originally defined by Shi \cite{shi86, shi91}. Following \cite{lus83:padic}, we identify $\extW$ with the subgroup of $\Aut(\Z)$ defined by 
$$\{w \in \Aut(\Z) \mid \forall i \in \Z, w(n+i) =w(i)+n\}.$$
We express each $w\in \Aut(\Z)$ in terms of the sequence $[w(1), w(2), \ldots, w(n)]$, called the window notation. Then we have
\begin{gather*}
s_i = [1, 2, \ldots, i-1, i+1, i, i+2, \ldots, n],
\\s_0 = [0, 2, \ldots, n-1, n+1], \qquad \tau = [2, 3, \ldots, n, n+1].
\end{gather*}
It is easy to check that they satisfy the defining relations of $\extW$.

For the description of the generalized Robinson-Schensted algorithm, it is natural to consider an infinite version of (standard) Young tableaux. To that end, we define the notion of an infinite periodic sequence as follows. 
\begin{defn} \label{def:piseq}
For each $i \in \Z_{>0}$, let $\tilde{r}_i$ be a finite sequence of integers. Let $\tilde{r}_i(1)$ (resp. $\tilde{r}_i(0)$) be the smallest positive (resp. largest nonpositive) integer in $\tilde{r}_i$ (if exists) and label each element in $\tilde{r}_i$ by its position relative to $\tilde{r}_i(1)$ or $\tilde{r}_i(0)$. Then we call $\tilde{r}=(\tilde{r}_1, \tilde{r}_2, \ldots)$ \emph{an infinite periodic sequence modulo $n$} (\emph{IP sequence mod $n$} for short) if it satisfies the following properties;
\begin{enumerate} 
\item each $\tilde{r}_i$ is strictly increasing, thus in particular $\tilde{r}_i(k)>0$ if and only if $k>0$,
\item $\lim_{i \rightarrow \infty} |\tilde{r}_i| = \infty$, 
\item for any $k \in \Z$, the limit $\lim_{i \rightarrow \infty}(\tilde{r}_i(k))$ exists,
\item there exists $M \in \bN$ such that for any $i>0$ and $\tilde{r}_i=(\tilde{r}_i(s), \tilde{r}_i(s+1), \ldots, \tilde{r}_i(t-1), \tilde{r}_i(t))$, 
$$\tilde{r}_i(k) = \lim_{j\rightarrow \infty} \tilde{r}_j(k) \quad \textup{ for any } s+M\leq k\leq t-M, \quad \textup{ and}$$
\item there exists $1 \leq l \leq n$ and $1\leq a_1<a_2<\cdots<a_l \leq n$ such that 
\begin{gather*}
\lim_{i \rightarrow \infty}\tilde{r}_{i}(kl+r) =kn+a_r \quad \textup{ for any } k \in \Z \textup{ and } 1\leq r \leq l.
\end{gather*}
\end{enumerate}
For any such sequence $\tilde{r}$, it is clear that $l, a_1, \ldots, a_l$ in (5) are uniquely determined if they exist. We define $\Psi_n$ to be the function which sends $\tilde{r}$ to the finite sequence $(a_1, \ldots, a_l)$.
\end{defn}
Likewise, we define an infinite periodic tabloid as follows.
\begin{defn} \label{def:pitab}Let $\tilde{T} = (\tilde{T}_1, \tilde{T}_2, \ldots)$ be an infinite series of Young tabloids such that the following properties hold.
\begin{enumerate} 
\item There exists $M' \in \bN$ such that $\tilde{T}_i$ has $\leq M'$ rows. We define $l(\tilde{T})$ to be the smallest $M'$ which satisfies this property, called the length of $\tilde{T}$.
\item For each $1\leq j \leq l(\tilde{T})$, $\tilde{T}^{(j)} \colonequals \{\tilde{T}_{i}^{(j)}\}_{i\geq 1}$ is an IP sequence mod $n$ where $\tilde{T}_{i}^{(j)}$ is the $j$-th row of $\tilde{T}_i$.
\end{enumerate}
Then we call $\tilde{T}$ \emph{an infinite periodic tabloid modulo $n$} (\emph{IP tabloid mod $n$} for short). For such $\tilde{T}$, we similarly define $\Psi_n(\tilde{T})$ to be the Young tabloid whose rows are $\Psi_n(\tilde{T}^{(1)}),\ldots, \Psi_n(\tilde{T}^{(l(\tilde{T}))})$. Note that an IP sequence mod $n$ is an IP tabloid mod $n$ of length 1. We define $IP_n$ to be the set of infinite periodic tabloids modulo $n$. 
\end{defn}

For any element $w \in \extW$, consider the following sequence that is infinite in both ways:
$$\ldots, w(-3), w(-2), w(-1), w(0), w(1), w(2), w(3), \ldots$$
We consider a sequence $\{(a_i, b_i)\}_{i\geq 1}$ such that $a_i \leq b_i$, $a_i$ is decreasing, $b_i$ is increasing, $\lim_{i\rightarrow \infty}a_i = -\infty$, and $\lim_{i\rightarrow \infty} b_i = \infty$. For each $i$, we consider the standard Young tableaux $\tilde{T}_i$ which is the result of the usual Robinson-Schensted algorithm with input $(w(a_i), w(a_i+1), \ldots, w(b_i-1), w(b_i))$. Then we obtain a series of standard Young tableaux $\tilde{T}=\{\tilde{T}_i\}_{i\geq 1}$. Now we apply the argument in \cite[Section 7]{cpy15} to obtain the following.

\begin{prop} \label{prop:genrs} $\tilde{T} \in IP_n$ and $l(\tilde{T})\leq n$. Also, $\Psi_n(\tilde{T})$ is the same as the result of the generalized Robinson-Schensted algorithm defined in \textup{\cite{shi86, shi91}} applied to $w^{-1}$. (In particular, $\Psi_n(\tilde{T})$ does not depend on the choice of the sequence $\{(a_i, b_i)\}_{i\geq 1}$). Moreover, if $w$ is an element of $W \subset \extW$, then $\Psi_n(\tilde{T})$ is the same as the output of the usual Robinson-Schensted algorithm applied to $w^{-1}$.
\end{prop}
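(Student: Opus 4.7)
The plan is to verify the claims in order, making essential use of the RS-insertion analysis for periodic inputs carried out in \cite[Section 7]{cpy15}.

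First I would establish that $\tilde{T} \in IP_n$. The key input is that $w \in \extW$ satisfies $w(i+n) = w(i)+n$, so inserting $w(b_i+1), w(b_i+2), \ldots$ eventually feeds the RS algorithm values that are shifted copies of values already inserted. One tracks how each row of $\tilde{T}_i$ evolves as the window grows: its middle portion stabilizes after finitely many insertions, while the two tails grow in arithmetic progressions with common difference $n$. This translates directly into properties (1)--(5) of Definition \ref{def:piseq} for each sequence $\tilde{T}^{(j)} = \{\tilde{T}_i^{(j)}\}_{i\geq 1}$, verifying that $\tilde{T}$ belongs to $IP_n$.

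Next, to bound $l(\tilde{T}) \leq n$, I would invoke Schensted's theorem that the number of rows of $\tilde{T}_i$ equals the length of the longest strictly decreasing subsequence in $(w(a_i), \ldots, w(b_i))$. If $w(i_1) > w(i_2) > \cdots > w(i_k)$ is such a subsequence with $i_1 < \cdots < i_k$, then no two indices can be congruent mod $n$: if $i_p \equiv i_q \pmod{n}$ with $p < q$, then $w(i_q) = w(i_p) + (i_q - i_p) > w(i_p)$, contradicting the decreasing property. Hence $k \leq n$ and the row bound follows uniformly in $i$.

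The main step is matching $\Psi_n(\tilde{T})$ with the output of Shi's generalized RS algorithm applied to $w^{-1}$. Here I would invoke the analysis in \cite[Section 7]{cpy15}, which shows that the period data $(a_1, \ldots, a_l)$ extracted from each stabilized row of $\tilde{T}$ coincides with the row produced by Shi's affine procedure applied to $w^{-1}$. Because Shi's output depends only on $w$ and not on any auxiliary window, independence of the choice of $\{(a_i,b_i)\}_{i\geq 1}$ is an immediate corollary. Finally, when $w \in W$ its inverse lies in $W$ as well, and Shi's generalized algorithm reduces by construction to the usual RS algorithm on $W$, so the last assertion follows from the general identification.

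The principal technical obstacle is the third step: establishing the precise bijection between the asymptotic periodic data of RS-insertion on increasingly long windows of the bi-infinite sequence and the combinatorial output of Shi's algorithm on $w^{-1}$. This is where essentially all the content lies, while the other claims follow from standard properties of RS together with the defining identity $w(i+n) = w(i)+n$ of $\extW$.
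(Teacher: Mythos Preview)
Your proposal is correct and follows essentially the same approach as the paper: both treat the identification with Shi's algorithm as the substantive content and defer it to \cite[Section 7]{cpy15}. The paper in fact gives no proof beyond that citation, whereas you supply the auxiliary details (the Schensted argument for $l(\tilde{T})\leq n$ and the sketch of why periodicity of $w$ forces the IP conditions), which is a welcome elaboration but not a different route.
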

Here $w^{-1}$ appears instead of $w$ since we consider the left action of $\extW$ on $\Z$ instead of the right one. We define $Q(w)$ to be such $\Psi_n(\tilde{T})$ in the theorem and set $P(w) \colonequals Q(w^{-1})$. Then by \cite{shi86,shi91}, $P(w)$ and $Q(w)$ have the same shape. For $w, w' \in \extW$,  $Q(w)=Q(w')$ (resp. $P(w)=P(w')$) if and only if they are contained in the same left (resp. right) cell. Likewise, $Q(w)$ and $Q(w')$ have the same shape if and only if they lie in the same two-sided cell, which is parametrized by the shape of $Q(w)$.

\subsection{Affine Sch{\"u}tzenberger involution and combinatorial $R$-matrix}
Recall the involution $\omega$ acting on $\extW$. Under the identification of $\extW$ with the subset of $\Aut(\Z)$, it corresponds to the conjugation by the element  $k \mapsto 1-k$ in $\Aut(\Z)$. It permutes left cells in $\extW$, thus defines an involution on the set of row-standard Young tableaux under the generalized Robinson-Schensted correspondence. Also, since $\omega$ stabilizes each two-sided cell, it restricts to the involution on $RSYT(\lambda)$ (the set of row-standard Young tableaux of shape $\lambda$) for each $\lambda \vdash n$. 

We claim that this action can be described in terms of combinatorial $R$-matrices. (This is originally proved by Chmutov-Lewis-Pylyavskyy.) First we recall the definition of combinatorial $R$-matrices on the tensor product of single-row Kirillov-Reshetikhin crystals (KR crystals for short). We refer readers to \cite{shi05} for a nice exposition on this subject. For $a, b \in \Z_{>0}$, we regard $RSYT((a,b))$ as a subset of the vertices of the crystal graph $\bB^b \otimes \bB^a$, where $\bB^s$ is the KR crystal of shape $(s)$ (of $U_q(\widehat{\mathfrak{sl}_k})$ for a suitable choice of $k$). Then there exists a unique isomorphism $\bB^b \otimes \bB^a \rightarrow \bB^a \otimes \bB^b$, which we call the combinatorial $R$-matrix, and it restricts to a bijection
$$\Rmat: RSYT((a,b)) \rightarrow RSYT((b,a)).$$
\cite[Example 4.10]{shi05} describes this operation using jeu-de-taquin and sliding process. Here we briefly explain his description with an example.

\begin{example} \label{ex:rmat}
\ytableausetup{smalltableaux}
Let $a=4, b=3$ and $T = \begin{ytableau}
3 &4 & 6 & 7 \\
1 & 2 & 5 \\
\end{ytableau}  \in RSYT((a,b))$. To apply $\Rmat$, we first draw the skew-shaped standard Young tableau $\begin{ytableau}
\none & \none& \none&3 &4 & 6 & 7 \\
1 & 2 & 5 \\
\end{ytableau}$
obtained from sliding the first row to the right, and apply jeu-de-taquin process until each row has the correct number of boxes. 
$$\begin{ytableau}
\none & \none& \none&3 &4 & 6 & 7 \\
1 & 2 & 5 & \bullet \\
\end{ytableau} \rightarrow \begin{ytableau}
 \none& \none&3 &4 & 6 & 7 \\
1 & 2 & 5 & \bullet \\
\end{ytableau}\rightarrow \begin{ytableau}
\none&\none&\none&4 & 6 & 7 \\
1 & 2 & 3 & 5 \\
\end{ytableau}$$
As a result, we have $\Rmat(T) = \begin{ytableau}
4 & 6 & 7 \\
1 & 2 & 3 & 5 \\
\end{ytableau}$. Note that both $\begin{ytableau}
\none & \none& \none&3 &4 & 6 & 7 \\
1 & 2 & 5 \\
\end{ytableau}$ and $\begin{ytableau}
\none & \none& \none&\none &4 &6  & 7 \\
1 & 2 & 3&5 \\
\end{ytableau}$ are jeu-de-taquin equivalent to the standard Young tableau $\begin{ytableau}
1& 2& 3&4 &6  & 7 \\
  5 \\
\end{ytableau}$. In general, the combinatorial $R$-matrix does not change the associated jeu-de-taquin equivalent standard Young tableau.

Or, first we again consider $\begin{ytableau}
\none & \none& \none&3 &4 & 6 & 7 \\
1 & 2 & 5 \\
\end{ytableau}$ and slide each box on the second row to the rightest with preserving semi-standard property to get $\begin{ytableau}
\none & \none&3 &4 & 6 & 7 \\
1 & 2 & \none &5 \\
\end{ytableau}$. Then, push down the correct number of leftmost boxes (in this case we push down $4-3=1$ box) from the first row to obtain $\begin{ytableau}
\none & \none&\none &4 & 6 & 7 \\
1 & 2 & 3 &5 \\
\end{ytableau}$. Thus we also see that $\Rmat(T) = \begin{ytableau}
4 & 6 & 7 \\
1 & 2 & 3 & 5 \\
\end{ytableau}$.

If $a<b$, then we first slide each box in the first row to the leftest with preserving semi-standard property and push up the correct number of rightmost boxes from the second row. 
\end{example}

This combinatorial $R$-matrix is generalized to any finite tensor product of single-row KR crystals. In particular, for any sequence of positive integers $\lambda=(\lambda_1, \ldots, \lambda_r)$ (not necessarily a partition) we similarly define
$$\Rmat_i : RSYT(\lambda) \rightarrow RSYT((\lambda_1, \ldots, \lambda_{i-1}, \lambda_{i+1}, \lambda_i, \lambda_{i+2}, \ldots, \lambda_r))$$
to be the corresponding combinatorial $R$-matrix. From the theory of crystals, we easily deduce the following properties of $\Rmat$.
\begin{enumerate}
\item $\Rmat_i^2 = Id$.
\item $\Rmat_i\Rmat_{i+1}\Rmat_i = \Rmat_{i+1}\Rmat_i\Rmat_{i+1}$.
\item If $\lambda_i = \lambda_{i+1}$, then $\Rmat_i = Id$.
\item In general, if $\lambda_1, \ldots, \lambda_r, \mu_1, \ldots, \mu_r\in \bN$ such that $\{\lambda_1, \ldots, \lambda_r \} = \{\mu_1, \ldots, \mu_r \}, $ then all the compositions of combinatorial $R$-matrices from $RSYT((\lambda_1, \ldots, \lambda_r))$ to $RSYT((\mu_1, \ldots, \mu_r))$ give the same map.
\end{enumerate}

As promised, we illustrate $\omega$ in terms of combinatorial $R$-matrices as follows.
\begin{prop} \label{prop:rmat}Let $\lambda \vdash n$ be a partition. For a given $T \in RSYT(\lambda)$, $T \mapsto \omega(T)$ is equivalent to the following process:
\begin{enumerate}
\item rotate $T$ by $180^\circ$ and push each row to the left so that it becomes a Young tabloid, 
\item substitute each entry $i$ by $n+1-i$ to make it row-standard, and
\item apply combinatorial $R$-matrices accordingly to retain the original shape $\lambda$.
\end{enumerate}
\end{prop}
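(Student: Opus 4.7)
The plan is to pass through the generalized Robinson--Schensted correspondence of Proposition \ref{prop:genrs}, translating the action of $\omega$ on $\extW$ into a combinatorial operation on the limiting IP tabloid, and then invoking Sch\"utzenberger's classical theorem on reverse--complemented RS inputs together with a crystal--theoretic description of evacuation via combinatorial $R$-matrices.

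First, I would describe $\omega$ in window notation. Since $\omega$ acts on $\extW \subset \Aut(\Z)$ as conjugation by the involution $\sigma \in \Aut(\Z)$ given by $\sigma(k) = 1-k$, a direct calculation yields $\omega(w)(k) = 1 - w(1-k)$. Consequently, up to reindexing, the input sequence for the generalized RS algorithm applied to $\omega(w)$ on a window $[a_i,b_i]$ is obtained from the input sequence for $w$ on the window $[1-b_i,1-a_i]$ by reversing the order and replacing each entry $v$ by $1-v$. Choosing the exhausting sequence of windows to be symmetric (e.g.\ $a_i = 1-b_i$) and with $b_i-a_i+1$ a multiple of $n$ makes these sequences line up correctly.

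Second, I would invoke the classical fact that the RS pair of the reverse--complement of a word is the pair of evacuations of the original RS pair; applied entry--wise to $\tilde T_i$, this gives $\tilde T_i(\omega(w)) = \text{evac}(\tilde T_i(w))$, and hence $Q(\omega(w)) = \lim_i \Psi_n(\text{evac}(\tilde T_i(w)))$. Third, I would use the crystal--theoretic description of evacuation on a finite SYT of partition shape $\mu$: it can be realized by rotating the tableau by $180^\circ$, complementing each entry $v$ by $|\mu|+1-v$ (producing a row--standard tabloid of shape $(\mu_r,\ldots,\mu_1)$), and then applying combinatorial $R$-matrices to restore the shape $\mu$. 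This reflects the realization of evacuation as the action of the longest Weyl group element on the tensor product of single--row KR crystals $\bB^{\mu_1}\otimes \cdots \otimes \bB^{\mu_r}$. Passing to the $\Psi_n$-limit, rotation of the IP tabloid permutes rows in the same way as for each $\tilde T_i$; the complement $v \mapsto |\tilde T_i|+1-v$ reduces modulo $n$ to $v \mapsto n+1-v$ thanks to the divisibility condition; and the $R$-matrices on the $\tilde T_i$ descend to the $R$-matrices on $RSYT(\lambda)$.

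The main obstacle I anticipate is verifying this last compatibility, namely that the combinatorial $R$-matrix on a growing family of finite SYT commutes with taking the $\Psi_n$-limit. This requires tracking how the jeu--de--taquin slides involved in each $R$-matrix behave at the ``boundary'' of a growing window, and showing that they stabilize in the periodic manner prescribed by Definitions \ref{def:piseq} and \ref{def:pitab}. A secondary subtlety is that Sch\"utzenberger's reverse--complement theorem is usually stated for standard permutations, but since the entries of each window consist of pairwise distinct integers it applies verbatim in the totally ordered alphabet $\Z$.
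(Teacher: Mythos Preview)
Your proposal is correct and follows essentially the same route as the paper's proof: choose symmetric windows $a_i+b_i=1$, apply Sch\"utzenberger's reverse--complement theorem to each $\tilde T_i$, realize evacuation as rotate $+$ complement $+$ $R$-matrices, and pass to the $\Psi_n$-limit. The compatibility of $\Rmat_i$ with $\Psi_n$ that you flag as the main obstacle is precisely what the paper isolates as Lemma~\ref{lem:comm}; one small simplification in the paper is to keep the unnormalized integer entries throughout, so the relevant complement is $v\mapsto 1-v$ (which becomes $v\mapsto n+1-v$ after $\Psi_n$) and no divisibility condition on the window length is required.
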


First, we restrict our attention to $W \subset \extW$ and $SYT(\lambda) \subset RSYT(\lambda)$ (the set of standard Young tableaux of shape $\lambda$). Then for any $w \in W$, $\omega(w) = w_0ww_0$ where $w_0 \in W$ is the longest element in $W$. Under the Robinson-Schensted algorithm, this corresponds to the usual Sch{\"u}tzenberger involution. Also, it follows from \cite[Appendix A]{sta86} that this involution is the same as the one described in Proposition \ref{prop:rmat}. Therefore, this proposition is true for elements in $SYT(\lambda)$.

In general, let $\tilde{T}$ be an IP tabloid mod $n$ such that $\Psi_n(\tilde{T})$ is a row-standard Young tabloid. We claim that combinatorial $R$-matrices and the function $\Psi_n$ behave well together as follows.
\begin{lem} \label{lem:comm}For $1 \leq i \leq l(\tilde{T})-1$, let $\Rmat_i(\tilde{T})$ be the series $(\Rmat_i(\tilde{T}_1), \Rmat_i(\tilde{T}_2), \ldots)$. Then $\Rmat_i(\tilde{T})$ is again an IP tabloid mod $n$ and we have $\Psi_n(\Rmat_i(\tilde{T}))= \Rmat_i(\Psi_n(\tilde{T})).$
\end{lem}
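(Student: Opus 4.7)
The plan is to exploit the locality of the jeu-de-taquin description of $\Rmat$ illustrated in Example \ref{ex:rmat}. Since $\Rmat_i$ touches only rows $i$ and $i+1$ and leaves all others intact, and since each remaining row of $\tilde{T}$ is by hypothesis an IP sequence mod $n$, it suffices to handle the two rows $\tilde{r}\colonequals\tilde{T}^{(i)}$ and $\tilde{s}\colonequals\tilde{T}^{(i+1)}$ separately. Write $\Psi_n(\tilde{r})=(a_1,\ldots,a_l)$ and $\Psi_n(\tilde{s})=(c_1,\ldots,c_m)$. By the row-standardness of $\Psi_n(\tilde{T})$, these are two rows of the same row-standard Young tabloid, so $\Rmat_i(\Psi_n(\tilde{T}))$ is well defined.

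The first step is to show that $\Rmat_i(\tilde{T})$ is again an IP tabloid mod $n$. The key observation is that the slide/push procedure of Example \ref{ex:rmat} modifies only a bounded window of entries near the overlapping region of the two rows: in the sliding description, the first row is shifted horizontally by $|b-a|$ positions, then entries are pushed across the boundary until both row lengths are correct. Because $\tilde{r}_j$ and $\tilde{s}_j$ each already have their own eventual periodic pattern by condition (4) in Definition \ref{def:piseq}, for $j$ sufficiently large the entries on both sides of this window are the stabilized values $\lim_{j\to\infty}\tilde{r}_j(k)$ and $\lim_{j\to\infty}\tilde{s}_j(k)$. This lets one check conditions (1)--(5) of Definition \ref{def:piseq} for the two new rows $\Rmat_i(\tilde{T})^{(i)}$ and $\Rmat_i(\tilde{T})^{(i+1)}$: strict increase and growth of length follow from the KR crystal isomorphism preserving the multiset of entries in the two rows; existence of the stabilized pointwise limits and their periodicity modulo $n$ follow because the bounded modification window is eventually disjoint from any fixed index $k$.

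Next, to identify the stabilization, one reads off $\Psi_n$ of the new rows. The "bulk" of $\Rmat_i(\tilde{T})^{(i)}$ (resp.\ $\Rmat_i(\tilde{T})^{(i+1)}$) is the eventual periodic part, which by the previous paragraph is determined entirely by the sliding procedure applied to the stabilized periodic entries. But the output of the sliding procedure on a periodic input is itself periodic with the same period $n$, and the "period window" of length $l$ (resp.\ $m$) it yields is precisely the sliding/jeu-de-taquin output on the two-row tabloid with rows $(a_1,\ldots,a_l)$ and $(c_1,\ldots,c_m)$, which is exactly $\Rmat_i(\Psi_n(\tilde{T}))$ restricted to these rows. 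Comparing gives $\Psi_n(\Rmat_i(\tilde{T}))=\Rmat_i(\Psi_n(\tilde{T}))$.

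The main obstacle is the first paragraph: one must make the statement "only a bounded window is affected" precise and uniform in $j$. A clean way is to take the constant $M$ of Definition \ref{def:piseq}(4) for both $\tilde{r}$ and $\tilde{s}$, and show that the sliding in $\Rmat$ disturbs only the first and last $O(M+|b-a|+|a_1|+|c_1|)$ entries of each row, a quantity independent of $j$; once this bound is established, conditions (3)--(5) for $\Rmat_i(\tilde{T})$ and the identity $\Psi_n\circ\Rmat_i=\Rmat_i\circ\Psi_n$ both reduce to the finite computation on $(\Psi_n(\tilde{r}),\Psi_n(\tilde{s}))$.
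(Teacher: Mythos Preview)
Your proposal is correct and follows essentially the same approach as the paper: both reduce to the two-row case and invoke the sliding/jeu-de-taquin description of $\Rmat$ together with the eventual $n$-periodicity of the rows. The paper's proof is in fact much terser than yours---it simply states that after reducing to $l(\tilde{T})=2$ the claim is ``an easy combinatorial exercise using the description of $\Rmat$ in terms of sliding process''---and then illustrates the mechanism in the subsequent example, where the infinite sliding picture is seen to be a concatenation of the finite sliding picture on the period window; your sketch is a more explicit unpacking of exactly this idea. (One small notational slip: you write $|b-a|$ without having defined $a,b$; presumably you mean the difference $|l-m|$ of the two row lengths, or the shift used in the skew shape.)
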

\begin{proof}It suffices to assume that $l(\tilde{T})=2$ and $i=1$. In this case, it is an easy combinatorial exercise using the description of $\Rmat$ in terms of sliding process in \cite[Example 4.10]{shi05}.
\end{proof}
\begin{example} Suppose $\tilde{T}$ is an IP tabloid mod 7 such that $\Psi_7(\tilde{T}) = \begin{ytableau}
3 &4 & 6 & 7 \\
1 & 2 &5 \\
\end{ytableau}$. Then (the limit of) each row of $\tilde{T}$ looks like
\ytableausetup{nosmalltableaux}
\ytableausetup{boxsize=1.7em}
\begin{gather*}
\begin{ytableau}
\cdots &-11 &-10 & -8 & -7&-4 &-3 & -1 & 0& 3 &4 & 6 & 7 &10 &11 & 13 & 14&17 &18 & 20 & 21 & \cdots
\end{ytableau}, \quad \textup{ and}
\\\begin{ytableau}
\cdots&-13 &-12 & -9 &-6 &-5 & -2 & 1& 2 &5 & 8 & 9 &12 &15 & 16 &19 &\cdots
\end{ytableau}.
\end{gather*}
Now we put each box in the second row to the rightest with keeping semi-standard property. Then it looks like
\begin{gather*}
\begin{ytableau}
\none&\none&\cdots &-11 &-10 & -8 & -7&-4 &-3 & -1 & 0& 3 &4 & 6 & 7 &10 &11 & 13 & 14&17 &18 & 20 & 21 & \cdots
\\
\cdots&-13 &-12 &\none& -9 &-6 &-5 &\none& -2 & 1& 2 &\none& 5 & 8 & 9 &\none &12 &15 & 16 &\none &19 &\cdots
\end{ytableau}
\end{gather*}
\ytableausetup{smalltableaux}
But this is a concatenation of $\begin{ytableau}
\none&\none& 3 &4 & 6 & 7 
\\1& 2 &\none& 5
\end{ytableau}$ and its shifts by a multiple of $7$. Note that $\begin{ytableau}
\none&\none& 3 &4 & 6 & 7 
\\1& 2 &\none& 5
\end{ytableau}$ appears in the usual combinatorial $R$-matrix calculation on $\Psi_7(\tilde{T}) = \begin{ytableau}
3 &4 & 6 & 7 \\
1 & 2 &5 \\
\end{ytableau}$ (cf. Example \ref{ex:rmat}). Also, if one pushes down all the possible boxes from the first row, then it corresponds to pushing down the box of entry 3 in $\begin{ytableau}
\none&\none& 3 &4 & 6 & 7 
\\1& 2 &\none& 5
\end{ytableau}$. Therefore, in this case we get $\Psi_7(\Rmat(\tilde{T}))=\Rmat(\Psi_7(\tilde{T}))$.

Indeed, it is not hard to show that the image of $\Psi_n\circ \Rmat$ only depends on $\Psi_n(\tilde{T})$; if $\tilde{T}, \tilde{T}'$ are two IP tabloids mod $n$ such that $\Psi_n(\tilde{T}) = \Psi_n(\tilde{T}')$, then indeed we have $\Psi_n(\Rmat(\tilde{T}))=\Psi_n(\Rmat(\tilde{T}'))$. In other words, one may simply ignore ``finite error" part in both ends of IP sequences mod $n$ because of the condition (4) in Definition \ref{def:piseq}.
\end{example}

\begin{proof}[Proof of Proposition \ref{prop:rmat}] Suppose $w \in \extW$ is given and $\tilde{T}=(\tilde{T}_1, \tilde{T}_2, \ldots)$ is an IP tabloid mod $n$ constructed in Proposition \ref{prop:genrs}. Here we choose $(a_i, b_i)_{i\geq 1}$ such that $a_i+b_i=1$. (This assumption is not necessary but it simplifies the proof.) In other words, each $\tilde{T}_i$ is the output of the usual Robinson-Schensted algorithm applied to the sequence $w(a_i), w(a_i+1), \ldots, w(b_i-1), w(b_i)$. If we apply $\omega$, then it corresponds to replacing the sequence $w(a_i), w(a_i+1), \ldots, w(b_i-1), w(b_i)$ with $\omega(w)(a_i), \omega(w)(a_i+1), \ldots, \omega(w)(b_i-1), \omega(w)(b_i), $ i.e.
$$1-w(1-a_i),1-w(-a_i), \ldots, 1-w(2-b_i), 1-w(1-b_i)$$
which is equal to
$$1-w(b_i),1-w(b_i-1), \ldots, 1-w(1+a_i), 1-w(a_i)$$
since $a_i+b_i=1$. According to \cite[Appendix A]{sta86}, this is similar to the usual Sch{\"u}tzenberger involution. Indeed, the output of usual Robinson-Schensted algorithm applied to $1-w(b_i),1-w(b_i-1), \ldots, 1-w(1+a_i), 1-w(a_i)$ can also be obtained from $\tilde{T}_i$ by the following process.
\begin{enumerate}
\item Rotate $\tilde{T}_i$ by $180^\circ$ and push each row to the left so that it becomes a Young tabloid, 
\item substitute each entry $i$ by $1-i$, and
\item find a standard Young tableau which is jeu-de-taquin equivalent to the result of (2) with the same shape as $\tilde{T}_i$.
\end{enumerate}
From the properties of combinatorial $R$-matrices, step (3) is also equivalent to the following.
\begin{enumerate}[label=(3')]
\item apply combinatorial $R$-matrices accordingly to retain the original shape of $\tilde{T}_i$.
\end{enumerate}
Now we apply $\Psi_n$ on the output of each $\tilde{T}_i$ under this process. Step (1) obviously commutes with $\Psi_n$, and so does step (2) modulo $n$. Also, step (3') commutes with $\Psi_n$ by Lemma \ref{lem:comm}. Therefore, $\omega(\Psi_n(\tilde{T}))$ is obtained from $T=\Psi_n(\tilde{T})$ by applying the process below:
\begin{enumerate}
\item Rotate $T$ by $180^\circ$ and push each row to the left so that it becomes a Young tabloid, 
\item substitute each entry $i$ by $1-i$ modulo $n$, say $n+1-i$, and
\item apply combinatorial $R$-matrices accordingly to retain the original shape of $T$.
\end{enumerate}
But this is what we want to prove.
\end{proof}

The description of $\omega$ in terms of combinatorial $R$-matrices has an advantage that it can be generalized to any row-standard Young tabloid, say $RSYT(\lambda)$ where $\lambda$ is a finite sequence of positive integers which is not necessarily a partition, since the method described in Proposition \ref{prop:rmat} does not rely on the condition that $\lambda$ is a partition. If we write such a generalization again by $\omega$, then the following lemma is easily proved.
\begin{lem}\label{lem:comm2} Let $\lambda = (\lambda_1, \ldots, \lambda_r)$ be a finite sequence of positive integers. Then for $1\leq i \leq r-1$, the two maps 
$$\omega \circ \Rmat_i, \Rmat_{i}\circ \omega\colon RSYT(\lambda) \rightarrow RSYT((\lambda_1, \ldots, \lambda_{i-1}, \lambda_{i+1}, \lambda_{i}, \lambda_{i+2}, \ldots, \lambda_r))$$
coincide. In other words, $\Rmat_i$ and $\omega$ ``commute".
\end{lem}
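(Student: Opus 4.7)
The plan is to leverage the three-step description of $\omega$ from Proposition~\ref{prop:rmat}, which (as the paper notes just before the lemma) makes sense for any finite sequence $\lambda$ of positive integers. First I would write $\omega = \mathbf{R}_\lambda \circ F_\lambda$, where $F_\lambda \colon RSYT(\lambda) \to RSYT(\tilde\lambda)$ combines the $180^\circ$ rotation and the substitution $j \mapsto n+1-j$ (with $\tilde\lambda = (\lambda_r, \ldots, \lambda_1)$), and $\mathbf{R}_\lambda \colon RSYT(\tilde\lambda) \to RSYT(\lambda)$ is any composition of combinatorial $R$-matrices restoring the original shape; by property~(4) of $\Rmat$, all such compositions agree. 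Setting $\lambda' = (\lambda_1, \ldots, \lambda_{i-1}, \lambda_{i+1}, \lambda_i, \lambda_{i+2}, \ldots, \lambda_r)$, one sees that $\tilde{\lambda'}$ differs from $\tilde\lambda$ only at positions $r-i$ and $r-i+1$.

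Next I would split the asserted commutativity into two intermediate claims:
\begin{enumerate}
\item[(I)] $F_{\lambda'} \circ \Rmat_i = \Rmat_{r-i} \circ F_\lambda$ as maps $RSYT(\lambda) \to RSYT(\tilde{\lambda'})$;
\item[(II)] $\Rmat_i \circ \mathbf{R}_\lambda = \mathbf{R}_{\lambda'} \circ \Rmat_{r-i}$ as maps $RSYT(\tilde\lambda) \to RSYT(\lambda')$.
\end{enumerate}
Granting (I) and (II), the lemma follows by stringing them together:
\[
\omega \circ \Rmat_i = \mathbf{R}_{\lambda'} \circ F_{\lambda'} \circ \Rmat_i = \mathbf{R}_{\lambda'} \circ \Rmat_{r-i} \circ F_\lambda = \Rmat_i \circ \mathbf{R}_\lambda \circ F_\lambda = \Rmat_i \circ \omega,
\]
where the second equality uses (I) and the third uses (II). Claim~(II) is essentially free: both sides are compositions of combinatorial $R$-matrices between $\tilde\lambda$ and $\lambda'$, which share the same multiset of parts, so property~(4) applies directly.

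Claim~(I) is where the work actually happens. Since $F$ acts on rows individually (reversing each row and applying $j \mapsto n+1-j$) and then reverses the ordering of rows, while $\Rmat_i$ (resp.\ $\Rmat_{r-i}$) modifies only rows at positions $i,i+1$ (resp.\ $r-i,r-i+1$), the rows outside the relevant pair are acted upon identically by the two sides of~(I). Thus (I) reduces to the two-row case: given $T_1, T_2$ of lengths $a, b$ with $\Rmat(T_1, T_2) = (U_1, U_2)$, prove that $\Rmat(\bar T_2, \bar T_1) = (\bar U_2, \bar U_1)$, where $\bar X$ denotes the row obtained from $X$ by reversing its entries and substituting $j \mapsto n+1-j$.

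The two-row identity is the main obstacle, and I propose two approaches. The combinatorial route is to track the sliding procedure of Example~\ref{ex:rmat}: the key observation is that a single jeu-de-taquin slide on a two-row skew picture is preserved by the combined $180^\circ$-rotation and alphabet-reversing symmetry, which can be verified by a short case analysis on the local move. The conceptual alternative is to identify the map $(T_1, T_2) \mapsto (\bar T_2, \bar T_1)$ with the Lusztig involution on the tensor product of single-row KR crystals $\bB^b \otimes \bB^a$ (satisfying the tensor-reversing formula $(x \otimes y)^\ast = y^\ast \otimes x^\ast$), and then to invoke the general fact that $\Rmat$, being the unique crystal isomorphism between $\bB^b \otimes \bB^a$ and $\bB^a \otimes \bB^b$, automatically commutes with this involution. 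Either route closes the gap and completes the proof of (I), and hence of the lemma.
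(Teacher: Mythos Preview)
Your argument is correct. The paper does not actually prove this lemma---it simply declares it ``easily proved'' from the description of $\omega$ in Proposition~\ref{prop:rmat}---so your decomposition $\omega = \mathbf{R}_\lambda \circ F_\lambda$, the reduction of claim~(II) to property~(4) of combinatorial $R$-matrices, and the reduction of claim~(I) to the two-row identity (handled either by the jeu-de-taquin symmetry or, more cleanly, by noting that $\ast \circ \Rmat \circ \ast$ is again a crystal isomorphism $\bB^b \otimes \bB^a \to \bB^a \otimes \bB^b$ and hence equals $\Rmat$ by uniqueness) constitute exactly the natural way to supply the omitted details.
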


\subsection{Proof of Proposition \ref{prop:str}} By Proposition \ref{prop:rmat} and Lemma \ref{lem:comm2}, we may illustrate $T \mapsto \omega(T)$ for $T\in RSYT(\lambda\cup(k,k))$ by the following process.
\begin{enumerate}
\item Apply $\Rmat$ accordingly to obtain a row-standard Young tabloid of shape $(k, \lambda_1, \ldots, \lambda_r, k)$.
\item Apply (the generalized version of) $\omega$ which is an involution on $RSYT((k, \lambda_1, \ldots, \lambda_r, k))$.
\item Apply $\Rmat$ accordingly to obtain a row-standard Young tableau of shape $\lambda \cup (k,k)$.
\end{enumerate}
Note that step (1) and (3) are inverse to each other. Therefore by Lemma \ref{lem:comm2}, we have
$$|RSYT(\lambda\cup(k,k))^\omega| = |RSYT((k, \lambda_1, \ldots, \lambda_r, k))^\omega|.$$
Now, we consider the surjection 
$$\Phi\colon RSYT((k, \lambda_1, \ldots, \lambda_r, k)) \rightarrow RSYT(\lambda)$$
which sends $T= (T^{(1)}, T^{(2)}, \ldots, T^{(r)}, T^{(r+1)}, T^{(r+2)})$ to the renormalization of $(T^{(2)}, \ldots, T^{(r)}, T^{(r+1)})$, i.e. removes the first and the last row (of length $k$) of $T$ and renormalizes the result so that the entries are $1, 2, \ldots, |\lambda|$. For example, 
\ytableausetup{smalltableaux}
$$\textup{if } k=2, \lambda=(4,3), \textup{ and } T=\begin{ytableau}
3&9\\
1 & 2 &7&11\\
5 &6&10\\
4 &8\\
\end{ytableau}, \textup{ then } \Phi(T)= \begin{ytableau}
1 & 2 &5&7\\
3 &4&6\\
\end{ytableau}.
$$
Now, from the description of $\omega$ (Proposition \ref{prop:rmat}), $T$ is $\omega$-stable if and only if
\begin{enumerate}
\item there exists $a_1,a_2, \ldots, a_k \in \{1, 2, \ldots, n\}$ such that $a_1<a_2<\cdots<a_k$ and $\{a_1, \ldots, a_k\} \cap \{n-a_1, \ldots, n-a_k\}=\emptyset$, and the first (resp. last) row of $T$ is $(a_1, a_2, \ldots, a_k)$ (resp. $(n-a_k, n-a_{k-1}, \ldots, n-a_1)$),
\item $\Phi(T)$ is $\omega$-stable.
\end{enumerate}
Thus, $\varphi(\lambda \cup(k,k))$ equals $\varphi(\lambda)$ multiplied by the number of choices of such $\{a_1, a_2, \ldots, a_k\}$, which is equal to $\binom{m}{k}2^k$ where $m = \floor{n/2}$. But this is what we want to prove.

\bibliographystyle{amsalphacopy}
\bibliography{leftcell}

\providecommand{\bysame}{\leavevmode\hbox to3em{\hrulefill}\thinspace}
\providecommand{\MR}{\relax\ifhmode\unskip\space\fi MR }
\providecommand{\MRhref}[2]{%
  \href{http://www.ams.org/mathscinet-getitem?mr=#1}{#2}
}
\providecommand{\href}[2]{#2}
\begin{thebibliography}{CFKLY18}

\bibitem[Bez04]{bez04}
Bezrukavnikov, R., \emph{On tensor categories attached to cells in affine
  {W}eyl groups}, Representation {T}heory of {A}lgebraic {G}roups and {Q}uantum
  {G}roups, Adv. {S}tud. {P}ure {M}ath., vol.~40, 2004, pp.~69--90.

\bibitem[Bez06]{bez06}
\bysame, \emph{Noncommutative counterparts of the {S}pringer resolution},
  Proceedings of the {I}nternational {C}ongress of {M}athematicians, vol.~2,
  2006, pp.~1119--1144.

\bibitem[Bez09]{bez09}
\bysame, \emph{Perverse sheaves on affine flags and nilpotent cone of the
  {L}anglands dual group}, Israel J. Math. \textbf{170} (2009), 185--206.

\bibitem[Bez16]{bez16}
\bysame, \emph{On two geometric realizations of an affine {H}ecke algebra},
  Publ. Math. Inst. Hautes {\'E}tudes Sci. \textbf{123} (2016), no.~1, 1--67.

\bibitem[BFO09]{bfo09}
Bezrukavnikov, R., Finkelberg, M., and Ostrik, V., \emph{On tensor categories
  attached to cells in affine {W}eyl groups, {I}{I}{I}}, Israel J. Math.
  \textbf{170} (2009), 101--119.

\bibitem[BM13]{bemi13}
Bezrukavnikov, R. and Mirkovi{\'c}, I., \emph{Representations of semisimple
  {L}ie algebras in prime characteristic and noncommutative {S}pringer
  resolution}, Ann. of Math. \textbf{178} (2013), 835--919.

\bibitem[BO04]{beos04}
Bezrukavnikov, R. and Ostrik, V., \emph{On tensor categories attached to cells
  in affine {W}eyl groups {I}{I}}, Representation {T}heory of {A}lgebraic
  {G}roups and {Q}uantum {G}roups, Adv. {S}tud. {P}ure {M}ath., vol.~40, 2004,
  pp.~101--119.

\bibitem[BV82]{bv82}
Barbasch, D. and Vogan, D., \emph{Primitive ideals and orbital integrals in
  complex classical groups}, Math. Ann. \textbf{259} (1982), 153--199.

\bibitem[Car93]{car93}
Carter, R.~W., \emph{Finite {G}roups of {L}ie {T}ype: {C}onjugacy {C}lasses and
  {C}omplex {C}haracters}, Wiley {C}lassics {L}ibrary, no.~48, Wiley, 1993.

\bibitem[CFKLY18]{cfkly}
Chmutov, M., Frieden, G., Kim, D., Lewis, J.~B., and Yudovina, E., \emph{An
  affine generalization of evacuation}, Available at
  \url{https://arxiv.org/abs/1806.07429}, 2018.

\bibitem[CPY18]{cpy15}
Chmutov, M., Pylyavskyy, P., and Yudovina, E., \emph{Matrix-ball construction
  of affine {R}obinson-{S}chensted correspondence}, Selecta Math. (New Ser.)
  \textbf{24} (2018), no.~2, 667--750.

\bibitem[Gar90]{gar90}
Garfinkle, D., \emph{On the classification of primitive ideals for complex
  classical {L}ie algebras, {I}}, Compos. Math. \textbf{75} (1990), 135--169.

\bibitem[Gar92]{gar92}
\bysame, \emph{On the classification of primitive ideals for complex classical
  {L}ie algebras, {I}{I}}, Compos. Math. \textbf{81} (1992), 307--336.

\bibitem[Gar93]{gar93}
\bysame, \emph{On the classification of primitive ideals for complex classical
  {L}ie algebras, {I}{I}{I}}, Compos. Math. \textbf{88} (1993), 187--234.

\bibitem[Gre55]{gre55}
Green, J.~A., \emph{The characters of the finite general linear groups}, Trans.
  Amer. Math. Soc. (1955), no.~2, 402--447.

\bibitem[Gui08]{gui08}
Guilhot, J., \emph{Some computations about {K}azhdan-{L}usztig cells in affine
  {W}eyl groups of rank 2}, Available at \url{https://arxiv.org/abs/0810.5165}
  (2008).

\bibitem[HS77]{hosp77}
Hotta, R. and Springer, T.~A., \emph{A specialization theorem for certain
  {W}eyl group representations and an application to the {G}reen polynomials of
  unitary groups}, Invent. Math. \textbf{41} (1977), no.~2, 113--127.

\bibitem[Kim18]{kim:total}
Kim, D., \emph{On total {S}pringer representations for classical types},
  Selecta Math. (N. S.) (2018),
  \url{https://doi.org/10.1007/s00029-018-0438-7}.

\bibitem[Kir98]{kir98}
Kirillov, A.~N., \emph{Combinatorial formula for modified {H}all-{L}ittlewood
  polynomials}, $q$-{S}eries from a {C}ontemporary {P}erspective (South Hadley,
  MA), Contemp. Math., vol. 254, American {M}athematical {S}ociety, 1998,
  pp.~283--333.

\bibitem[KL79]{kalu79}
Kazhdan, D. and Lusztig, G., \emph{Representations of {C}oxeter groups and
  {H}ecke algebras}, Invent. Math. \textbf{53} (1979), no.~2, 165--184.

\bibitem[LLT94]{llt94}
Lascoux, A., Leclerc, B., and Thibon, J.-Y., \emph{Green polynomials and
  {H}all-{L}ittlewood functions at roots of unity}, European J. Combin.
  \textbf{15} (1994), 173--180.

\bibitem[Lus83a]{lus83:left}
Lusztig, G., \emph{Left cells in {W}eyl groups}, Lie {G}roup {R}epresentations
  {I}, Lecture {N}otes in {M}ath., vol. 1024, Springer {B}erlin {H}eidelberg,
  1983, pp.~99--111.

\bibitem[Lus83b]{lus83:padic}
\bysame, \emph{Some examples of square integrable representations of semisimple
  $p$-adic groups}, Trans. Amer. Math. Soc. \textbf{277} (1983), no.~2,
  623--653.

\bibitem[Lus85]{lus85:cell}
\bysame, \emph{Cells in affine {W}eyl groups}, Algebraic {G}roups and {R}elated
  {T}opics, {A}dv. {S}tud. {P}ure {M}ath., vol.~6, 1985, pp.~255--287.

\bibitem[Lus89]{lus89:cell}
\bysame, \emph{Cells in affine {W}eyl groups, {I}{V}}, J. Fac. Sci. Univ. Tokyo
  \textbf{36} (1989), 297--328.

\bibitem[Lus97]{lus97:cell}
\bysame, \emph{Cells in affine {W}eyl groups and tensor categories}, Adv. Math.
  \textbf{129} (1997), 85--98.

\bibitem[Lus99]{lus99:kthy2}
\bysame, \emph{Bases in equivariant ${K}$-theory. {I}{I}}, Represent. Theory
  \textbf{3} (1999), 281--353.

\bibitem[Lus14]{lus14:hecke}
\bysame, \emph{Hecke {A}lgebras with {U}nequal {P}arameters, revised version},
  Available at \url{https://arxiv.org/abs/math/0208154}, 2014.

\bibitem[LX88]{luxi88}
Lusztig, G. and Xi, N., \emph{Canonical left cells in affine {W}eyl groups},
  Adv. {M}ath. \textbf{72} (1988), 284--288.

\bibitem[McG99]{mcg99}
McGovern, W.~M., \emph{On the {S}paltenstein-{S}teinberg map for classical
  {L}ie algebras}, Comm. Algebra \textbf{27} (1999), no.~6, 2979--2993.

\bibitem[McG00]{mcg00}
\bysame, \emph{A triangularity results for associated varieties of highest
  weight modules}, Comm. Algebra \textbf{28} (2000), no.~4, 1835--1843.

\bibitem[Shi86]{shi86}
Shi, J.-Y., \emph{The {K}azhdan-{L}usztig {C}ells in {C}ertain {A}ffine {W}eyl
  {G}roups}, Lecture {N}otes in {M}ath., vol. 1179, Springer-{V}erlag, 1986.

\bibitem[Shi91]{shi91}
\bysame, \emph{The generalized {R}obinson-{S}chensted algorithm on the affine
  {W}eyl group of type $\tilde{{A}}_{n-1}$}, J. Algebra \textbf{139} (1991),
  364--394.

\bibitem[Shi05]{shi05}
Shimozono, M., \emph{Crystals for dummies}, Available at
  \url{https://www.aimath.org/WWN/kostka/crysdumb.pdf} (2005).

\bibitem[Sta86]{sta86}
Stanley, R.~P., \emph{Enumerative {C}ombinatorics}, vol.~2, Cambridge {S}tudies
  in {A}dvanced {M}athematics, no.~62, Cambridge {U}niversity {P}ress, 1986.

\bibitem[Xi02]{xi02}
Xi, N., \emph{The based ring of two-sided cells of affine {W}eyl groups of type
  $\tilde{{A}}_{n-1}$}, Mem. Amer. Math. Soc. \textbf{157} (2002), no.~749.

\end{thebibliography}

\end{document}